\theoremstyle{plain}
	\newtheorem{thm}{Theorem}
	\newtheorem{cor}[thm]{Corollary}
	\newtheorem{lemma}[thm]{Lemma}
	\newtheorem{prop}[thm]{Proposition}
\theoremstyle{definition}
	\newtheorem{definition}[thm]{Definition}
\theoremstyle{remark}
	\newtheorem*{condition}{Flat Admissibility Condition}
	\newtheorem*{convention}{Convention}
\newcommand{\numberset}{\mathbb}
\newcommand{\CC}{\numberset{C}}
\newcommand{\NN}{\numberset{N}}
\newcommand{\PP}{\numberset{P}}
\newcommand{\RR}{\numberset{R}}
\newcommand{\TT}{\numberset{T}}
\newcommand{\ZZ}{\numberset{Z}}
\newcommand{\cH}{\mathcal{H}}
\newcommand{\cL}{\mathcal{L}}
\newcommand{\sL}{\mathscr{L}}
\renewcommand{\epsilon}{\varepsilon}
\renewcommand{\phi}{\varphi}
\DeclareMathOperator{\GL}{GL}
\DeclareMathOperator{\PGL}{PGL}
\DeclareMathOperator{\SL}{SL}
\DeclareMathOperator{\PSL}{PSL}
\DeclareMathOperator{\hol}{hol}
\DeclareMathOperator{\id}{id}
\DeclareMathOperator{\Leb}{Leb}
\DeclarePairedDelimiter{\abs}{\lvert}{\rvert}
\DeclarePairedDelimiter{\norm}{\lVert}{\rVert}
\newcommand{\blank}{\,{\cdot}\,}
\newcommand{\flow}{\phi_{t}^{\theta}}
\newcommand{\tflow}{\widetilde{\phi}_{t}^{\theta}}
\newcommand{\rot}[1]{\begin{pmatrix}\cos{#1}&-\sin{#1}\\ \sin{#1}&\cos{#1}\end{pmatrix}}
\begin{document}

\title{Exceptional ergodic directions in Eaton lenses}
\author{Mauro Artigiani}
\thanks{Research supported by a University of Bristol graduate scholarship}
\address{School of Mathematics \\ University of Bristol \\ University Walk \\ Bristol \\ BS8~1TW \\ United Kingdom}
\email{mauro.artigiani@bristol.ac.uk}
\subjclass[2010]{Primary 37D50,37A40; Secondary 37A60,37E35}
\keywords{infinite surfaces, light rays, translation surfaces, ergodicity}

\begin{abstract}
We construct examples of ergodic vertical flows in periodic configurations of Eaton lenses of fixed radius.
We achieve this by studying a family of infinite translation surfaces that are $\ZZ^2$-covers of slit tori.
We show that the Hausdorff dimension of lattices for which the vertical flow is ergodic is bigger than $3/2$.
Moreover, the lattices are explicitly constructed. 
\end{abstract}

\maketitle
\section{Introduction}
\subsection{Setting and statement of the results}
Circular Eaton lenses in the plane $\RR^2$ were introduced in~\cite{Eaton} as an example of a perfect retro-flector: when a light ray enters a lens it is reflected in the same direction with opposite orientation, see the left part of Figure~\ref{fig:onelens}.
We consider a system of such lenses of some fixed radius $R>0$ whose centres are placed on a lattice $\Lambda\subset\RR^2$, as recently studied by K.~Fr\k{a}czek and M.~Schmoll in~\cite{FraczekSchmoll}.
This leads to the study of an infinite periodic billiard, an area that has been intensively studied in the last few years; see Section~\ref{sec:literature} for more details and references.

\begin{figure}[ht]
\centering
\includegraphics[width=0.6\textwidth]{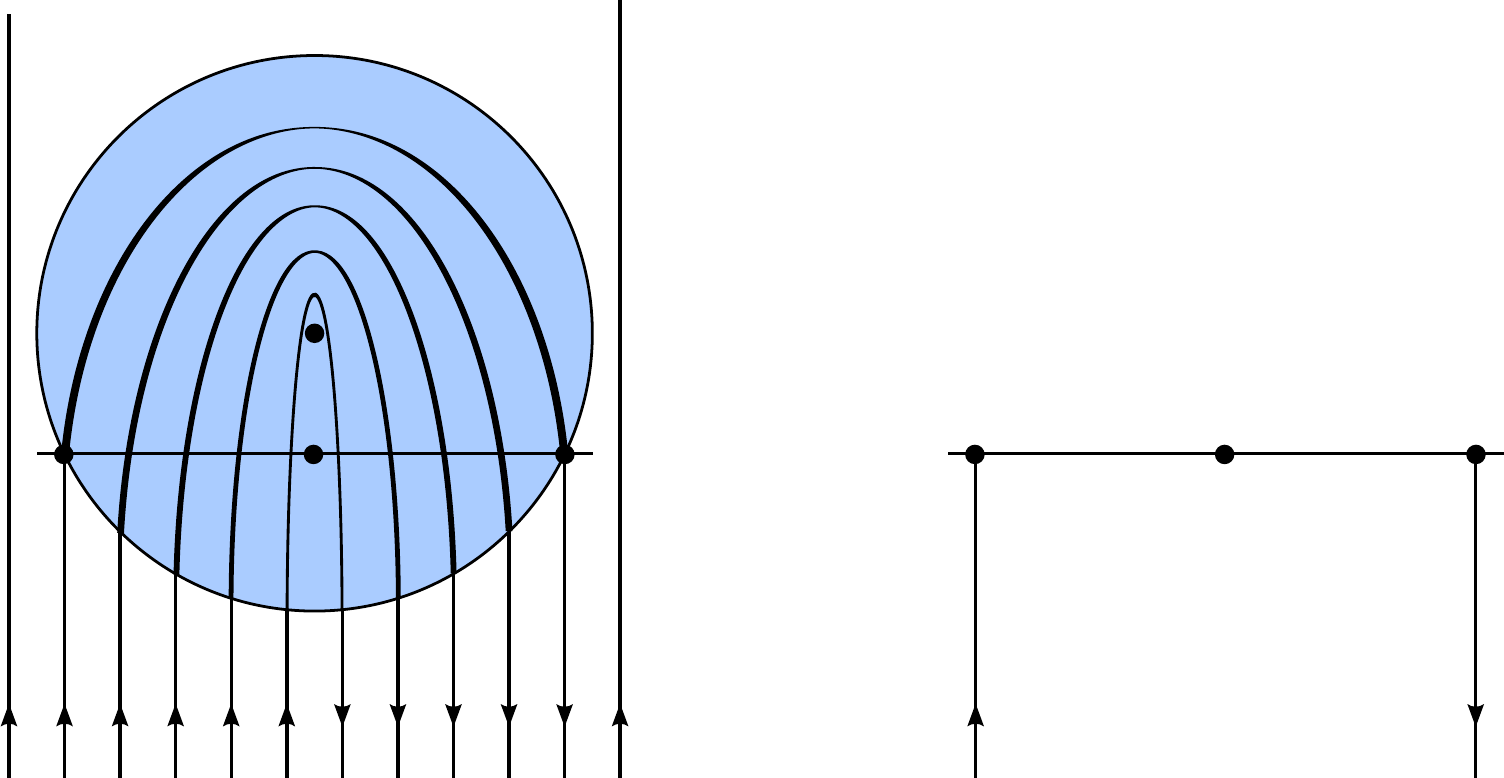}
\caption{Vertical trajectories entering a circular Eaton lens and the flat counterpart.}
\label{fig:onelens}
\end{figure}

A lattice $\Lambda$ is \emph{$R$-admissible} if the circles of radius $R$ centred at the lattice points do not overlap.
A system of Eaton lenses will be denoted $L(\Lambda,R)$.
Applying an appropriate rotation to the lattice, we can restrict ourselves to the study of vertical light rays.
Moreover, up to scaling, we can suppose that the lattice $\Lambda$ has covolume $1$, in other words $\Lambda\in\sL=\SL(2,\RR)/\SL(2,\ZZ)$.
From the circle packing problem, one knows that if $R\geq1/\sqrt{2\sqrt{3}}$ there are no lattices of covolume one that are $R$-admissible, while for lower values of the radius the set of $R$-admissible lattices is a non-empty open set in $\sL$.
We denote with $\mu_\sL$ the unique probability measure on $\sL$ which is invariant by the action by left multiplication by elements of $\SL(2,\RR)$.
K.~Fr\k{a}czek and M.~Schmoll recently discovered in~\cite{FraczekSchmoll} an interesting behaviour of light rays in Eaton lenses.
They proved the following

\begin{thm}
For every $0<R<1/\sqrt{2\sqrt{3}}$ and for $\mu_\sL$-almost every $R$-admissible lattice $\Lambda\in\sL$ there exist constants $C=C(\Lambda,R)>0$ and $\theta=\theta(\Lambda,R)\in S^1$ such that every vertical light ray in $L(\Lambda,R)$ is trapped in an infinite band of width $C>0$ in direction $\theta$.
\end{thm}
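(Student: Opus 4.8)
\emph{Sketch of proof.}
The plan is to turn the statement into a question about a linear flow on an infinite translation surface and then feed it to the theory of deviations of ergodic integrals. First I would unfold the lens system. An Eaton lens of radius $R$ is a perfect retro-reflector, so a vertical ray entering a lens leaves it again travelling vertically with the opposite orientation and displaced by at most the diameter $2R$; as suggested by the flat counterpart in Figure~\ref{fig:onelens}, this realises $L(\Lambda,R)$ as a $\ZZ^2$-cover $\pi\colon\widetilde{X}(\Lambda,R)\to X(\Lambda,R)$ of a compact translation surface, where $X(\Lambda,R)$ is the genus-two surface got by gluing two copies of the slit torus $(\RR^2/\Lambda)\setminus(\text{a horizontal slit of length }2R)$ across the slit according to the retro-reflection rule, the deck group $\ZZ^2\cong\Lambda$ acts by the lattice translations, and the vertical light rays of $L(\Lambda,R)$ correspond to the orbits of the vertical translation flow on $\widetilde{X}(\Lambda,R)$. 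The displacement $z(T)-z(0)\in\RR^2$ of a ray then differs by a bounded amount --- the diameter of a fundamental domain of $\RR^2/\Lambda$ plus $2R$ --- from the $\Lambda$-valued displacement cocycle $D_T(p)\in\Lambda\otimes\RR=\RR^2$ recording the deck translation accumulated along the vertical orbit of $p$ up to time $T$; so being trapped in a band of direction $\theta$ and width $C$ amounts to $D_T(p)$ staying within a bounded distance of the line $\RR\theta$, uniformly in $p$ and $T$.

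Next I would pass to cohomology. The cover is classified by a class in $H^1(X(\Lambda,R);\ZZ^2)$, represented by a closed $\RR^2$-valued $1$-form $\eta=(\eta_1,\eta_2)$, and closing up the vertical segment $\gamma_T(p)$ by a path of bounded length gives $D_T(p)=\int_{\gamma_T(p)}\eta+O(1)$. The crucial structural point is that the deck involution $\sigma$ exchanging the two slit tori commutes with the Teichm\"uller geodesic flow and with the Kontsevich--Zorich cocycle, and forces the plane $E=\langle[\eta_1],[\eta_2]\rangle\subset H^1(X(\Lambda,R);\RR)$ to coincide with one of the two canonical $\SL(2,\RR)$-invariant symplectic planes for that cocycle --- the tautological plane $\langle\operatorname{Re}\omega,\operatorname{Im}\omega\rangle$ or its symplectic orthocomplement --- hence $E$ is compatible with the Oseledets splitting and meets the unstable (non-negative-exponent) subbundle in a line. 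This is exactly the feature of Eaton-lens covers not shared by a generic $\ZZ^2$-cover, whose classifying plane is transverse to the Oseledets splitting.

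Then I would conclude with the Zorich--Forni estimates. For $\mu_\sL$-almost every $R$-admissible lattice $\Lambda$ the vertical flow on $X(\Lambda,R)$ is uniquely ergodic (Kerckhoff--Masur--Smillie, Masur--Veech) and its forward Teichm\"uller orbit is Oseledets-regular with uniform constants. Let $c_T\in H_1(X(\Lambda,R);\RR)$ be the class of the closed-up segment. One basis element of $E$ lying in the unstable subbundle, say $[\eta_1]$, pairs symplectically with $c_T$ only through the uniformly bounded stable component of $c_T$, so $\int_{\gamma_T(p)}\eta_1=O(1)$ uniformly in $p$ and $T$; the other, $[\eta_2]$, has a nonzero component in the stable subbundle and therefore pairs with the expanding part of $c_T$, so $\int_{\gamma_T(p)}\eta_2$ tends to infinity, at most linearly in $T$, in a fixed Oseledets direction. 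Hence $D_T(p)$ remains within a bounded neighbourhood, of some radius $C_0(\Lambda,R)$, of a line $\ell\subset\RR^2$ whose direction $\theta=\theta(\Lambda,R)$ is read off from the Oseledets splitting and does not depend on $p$. Passing back to the lens plane as above, every vertical light ray of $L(\Lambda,R)$ is trapped in an infinite band of width $C=C(\Lambda,R)$ about a line of direction $\theta$, the same $C$ and $\theta$ serving for all rays.

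The main obstacle is the structural step of the second paragraph: one must perform the unfolding carefully enough to verify that $L(\Lambda,R)$ really is the asserted $\ZZ^2$-cover of a slit torus and that the classifying plane $E$ is $\sigma$-equivariant, hence one of the two canonical symplectic planes --- this is precisely what separates Eaton-lens configurations from generic $\ZZ^2$-covers, for which the displacement would escape to infinity inside a whole cone rather than along a band. The remaining technical difficulty is to upgrade unique ergodicity, which gives only $D_T(p)=o(T)$, to the genuinely bounded transverse deviation $\int_{\gamma_T(p)}\eta_1=O(1)$ uniformly over all starting points and all lengths; this requires the finer deviation spectrum together with uniform Oseledets regularity along the Teichm\"uller orbit, or else an explicit Rauzy--Veech renormalisation argument on the family of slit tori.
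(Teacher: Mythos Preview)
This theorem is not proved in the present paper: it is quoted as a prior result of Fr\k{a}czek and Schmoll, with a citation to~\cite{FraczekSchmoll}, and no proof is given here. The only information the paper offers about the argument is the remark in Section~\ref{sec:literature} that ``the proof in~\cite{FraczekSchmoll} exploits the \emph{bounded deviations} phenomenon, discovered by A.~Zorich,'' together with the surrounding construction (the orientation cover $\widetilde{M}(\Lambda,R)$ as a $\ZZ^2$-cover of two slit tori, and the identification $H_1^{(0)}(M(z);\ZZ)=\{\gamma:\tau_*\gamma=-\gamma\}$). Your sketch is exactly along those lines---unfold to a $\ZZ^2$-cover of a genus-two surface, note that the classifying plane is a Kontsevich--Zorich-invariant symplectic subbundle, and invoke Zorich--Forni deviation estimates to get boundedness in one direction of $\RR^2$---so it agrees with the strategy the paper attributes to the original source, but there is no proof in this paper to compare against in detail.

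Two small points of precision. First, the involution you call $\sigma$ is not a deck transformation of the $\ZZ^2$-cover (the deck group is $\ZZ^2$); it is the affine automorphism $\tau$ of the compact base exchanging the two tori. Second, the classifying plane $E$ is not merely ``one of the two'' canonical planes: because the defining curves lie in $H_1^{(0)}$, it is specifically the $\tau$-anti-invariant plane, i.e.\ the symplectic orthocomplement of the tautological plane $\langle\operatorname{Re}\omega,\operatorname{Im}\omega\rangle$. This is what makes the Eaton-lens situation differ from the wind-tree model, as the paper notes in Section~\ref{sec:literature}.
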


A natural question is thus if there are \emph{exceptional} cases, that is if, for every radius, one can find lattices in which the vertical light rays are not confined.
In this paper we explicitly construct exceptional lattices and we give a lower bound on the Hausdorff dimension of the set of exceptional lattices, thus showing that this set is rich in the measure-theoretic sense.
We define the \emph{Eaton flow} as the flow that moves every point of the plane vertically with unit speed following trajectories of light rays.
This flow preserves the Lebesgue measure $\Leb$ on $\RR^2$.
We recall that a flow is ergodic if every set $A\subset\RR^2$ that is invariant under the flow has either $\Leb (A)=0$ or $\Leb (\RR^2\setminus A)=0$.
In particular, since the Lebesgue measure gives positive measure to all open sets, almost every orbit under the flow is \emph{dense}, and hence far from being trapped in a strip.
The main result of this paper is the following

\begin{thm}\label{thm:HausdorffEaton}
Let $0<R<1/2$.
Then there exists a set of $R$-admissible lattices $\Lambda\in\sL$ whose Hausdorff dimension is bigger than $3/2$ such that the Eaton flow is ergodic with respect to the Lebesgue measure on $L(\Lambda,R)$.
\end{thm}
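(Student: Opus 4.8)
The plan is to translate Theorem~\ref{thm:HausdorffEaton} into a statement about a $\ZZ^{2}$-cover of a slit torus, isolate a checkable sufficient condition for ergodicity, and then build by hand a Cantor set of lattices meeting it. Following Fr\k{a}czek and Schmoll, and as set up in the preceding sections, once the vertical direction has been fixed a circular Eaton lens of radius $R$ is flat-equivalent to a horizontal slit of length $2R$ whose two banks are reglued by a half-turn; performing this surgery at every point of $\Lambda$ turns $L(\Lambda,R)$ into an infinite translation surface $\widetilde{X}_{\Lambda}$ that is a $\ZZ^{2}$-cover of the compact genus-two surface $X_{\Lambda}$ obtained by the same slit construction on the torus $\RR^{2}/\Lambda$. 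Under a Lebesgue-measure preserving identification the Eaton flow becomes the vertical translation flow $\lflow$ on $\widetilde{X}_{\Lambda}$, and $R$-admissibility is exactly the condition that the slit of length $2R$ embeds in $\RR^{2}/\Lambda$. Thus it suffices to produce an $R$-admissible set $\mathcal{E}\subset\sL$ with $\dim_{H}\mathcal{E}>3/2$ on which $\lflow$ is ergodic.

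\emph{An ergodicity criterion.} The analytic heart of the argument is a sufficient condition for ergodicity of $\lflow$, which I would phrase via essential values of the $\ZZ^{2}$-valued return cocycle of the vertical flow over the compact surface $X_{\Lambda}$: one fixes a sequence of periodic directions $\alpha_{n}$ tending to the vertical in which $X_{\Lambda}$ decomposes into cylinders, so that correspondingly $\widetilde{X}_{\Lambda}$ decomposes into bi-infinite chains of cylinders, and one shows that if the vertical direction is approximated fast enough by the $\alpha_{n}$ and the cylinder data satisfy an explicit combinatorial condition, then the Birkhoff sums of the cocycle realise every element of $\ZZ^{2}$ as an essential value, forcing ergodicity by Schmidt's criterion. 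This is a \emph{Liouville}-type condition; it fails for $\mu_\sL$-almost every $\Lambda$, in agreement with the trapping theorem, so the null set it cuts out is the best one can hope for, and the content of the theorem is that this null set is nonetheless large.

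\emph{The Cantor construction.} Finally one builds $\mathcal{E}=\bigcap_{k\geq 0}\mathcal{E}_{k}$ as a nested intersection of finite unions of small pieces inside a fixed open ball of $R$-admissible lattices, so that admissibility is automatic. The three local coordinates on $\sL$ are used asymmetrically: along one of them a whole subinterval is kept at every stage, which already contributes $1$ to the dimension; along the two transverse directions, at level $k$ one imposes only the constraints needed to position the next approximating direction $\alpha_{k}$ and to verify the combinatorial condition, while retaining a fixed positive proportion of each previous piece. A mass-distribution-principle lower bound for the resulting self-affine Cantor set then gives it transverse dimension strictly larger than $1/2$, whence $\dim_{H}\mathcal{E}>3/2$; the branching data also exhibit the lattices of $\mathcal{E}$ explicitly, as claimed in the abstract.

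The delicate point is the last step, and specifically the competition between the two previous ones: the approximations $\alpha_{k}$ must converge to the vertical direction fast enough to switch on the essential-value mechanism, yet slowly enough, and be spread over enough independent directions, that the transverse Cantor set does not fall to dimension $1/2$ or below; at the same time one must control how the cylinder decomposition of $X_{\Lambda}$ deforms as $\Lambda$ ranges over a piece, so that the combinatorial condition of the ergodicity criterion remains valid along the whole construction. Reconciling these constraints is exactly what pins the exponent $3/2$.
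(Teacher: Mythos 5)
Your proposal gets the first reduction right (Eaton lenses $\to$ flat lenses $\to$ orientation cover $\to$ $\ZZ^2$-cover of a slit torus, then Schmidt's essential-value criterion), and this is indeed the paper's skeleton. But as written it has two genuine gaps, and where it is specific it diverges from the paper in a way that makes the argument harder rather than easier.

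First, the \enquote{explicit combinatorial condition} on the cylinder data is the actual heart of the matter, and you leave it as a black box. To make $(1,0)$ and $(0,1)$ both essential values you need \emph{two} families of approximating cylinders whose core curves have intersection vector $k(C)$ equal to $(\pm1,0)$ and $(0,\pm1)$ respectively with the classes $(\beta,-\alpha)$ defining the cover. Since the cylinders are images of two fixed cylinders under elements $g\in\SL(2,\ZZ)$, this forces you to control the induced action $g_*(z)$ on $H_1^{(0)}(M(z);\ZZ)$ and to manufacture words $g$ in $h_+,h_-$ with $g\cdot M(z)=M(z)$ and $g_*(z)=\id$. This is where the arithmetic hypotheses ($z=(r/2q,s/2q)$ with an odd numerator, the congruences $r+as\equiv -q$, $ds-r\equiv -q\pmod{2q}$) come from, and it is why the ergodic directions can be written down explicitly as continued fractions. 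Without some substitute for this homological bookkeeping, \enquote{the Birkhoff sums realise every element of $\ZZ^2$} is an assertion, not a proof; realising a single nonzero essential value is much easier than realising a pair of generators.

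Second, the dimension count. You keep a full interval in one coordinate of $\sL$ \enquote{at every stage, which already contributes $1$}, but for that you must know that ergodicity holds for \emph{every} lattice in that interval, not just for a Cantor subset of it. The paper gets this for free by identifying the free coordinate as the horocycle direction: $h_\tau$ fixes vertical vectors, so by the conjugation lemma the vertical flow on $h_\tau\cdot\widetilde{M}$ is ergodic whenever it is on $\widetilde{M}$. If your free coordinate is anything else, the claim fails. Relatedly, the paper does \emph{not} run a Cantor construction in $\sL$ at all, and does not need a mass distribution principle there or any control of how cylinder decompositions deform as $\Lambda$ varies: it fixes one rational slit torus $M(0,s/2q)$ with $s/2q$ close to $R$, produces a continued-fraction Cantor set $E$ of ergodic directions with $\dim_H E>1/2$ (a standard pressure/Falconer bound, since the free partial quotients range over an infinite arithmetic progression), and then transports each $\theta\in E$ to a lattice $G_{t^*}h_\tau r_{\pi/2-\theta}\cdot\ZZ^2$, with $t^*$ a function of $\theta$ chosen to normalise the slit length to $2R$ and to keep the lattice $R$-admissible. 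The exponent $3/2$ is then just $1+\dim_H E$ with the $1$ coming from the horocycle interval and the $t$-coordinate being a graph over $\theta$; there is no delicate competition \enquote{pinning} it. I would recommend restructuring your argument along these lines: it replaces the hard deformation-tracking step by equivariance of the $\SL(2,\RR)$-action.
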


In order to prove this result we study a related system obtained by replacing each circular lens with a horizontal obstacle of the same length of the diameter of the lens, centred in the middle point of the lens itself.
When a vertical light ray encounters any of these obstacles it is rotated by $180$ degrees around the centre of the obstacle and comes out with the opposite orientation, see the right part of Figure~\ref{fig:onelens}.
If a ray hits the centre of the obstacle, by convention we prolong the orbit on the same line with reversed orientation. 
We denote a system of these ``flat lenses'' with $F(\Lambda,R)$.
It follows from the construction that the orbits of vertical light rays are the same in $L(\Lambda,R)$ and in $F(\Lambda,R)$ except inside of each circular lens, see Figure~\ref{fig:morelenses}.
In particular, for the study of ergodicity, we can use the simpler system $F(\Lambda,R)$ to deduce information on our original setting of Eaton lenses.

\begin{figure}[bt]
\centering
\def\svgwidth{0.8\textwidth}
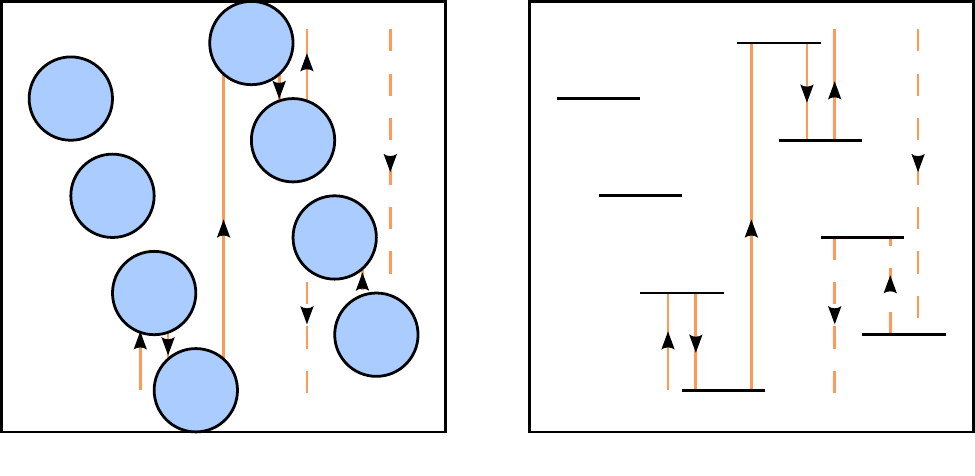
\caption{Trajectories in a periodic configuration of Eaton lenses $L(\Lambda,R)$ and their counterparts in the flat lenses system $F(\Lambda,R)$.}
\label{fig:morelenses}
\end{figure}

We will always assume the following

\begin{condition}
For $R>0$ and $\Lambda\in\sL$ the obstacles in $F(\Lambda,R)$ are pairwise disjoint.
\end{condition}

In particular, if $\Lambda$ is $R$-admissible, then the system $F(\Lambda,R)$ satisfies the flat admissibility condition.
On the infinite surface $F(\Lambda,R)$ the vertical trajectories of the light rays give rise to a non-orientable vertical foliation.
However, one can obtain an orientable foliation constructing a double cover $\widetilde{M}(\Lambda,R)$, called the \emph{orientation covering} of $F(\Lambda,R)$, in the following way.
Take two copies $F_\pm(\Lambda,R)$ of $F(\Lambda,R)$, corresponding to the two possible orientations of a vertical trajectory.
Every light ray travels in one copy $F_\pm(\Lambda,R)$ until it hits one obstacle; it is then rotated by $180$ degrees around the centre of the obstacle and comes out in the opposite copy $F_\mp(\Lambda,R)$, see Figure~\ref{fig:flatlenses}.
Denote $r_\pi F_-(\Lambda,R)$ the image of $F_-(\Lambda,R)$ under the rotation by $180$ degrees around the origin.
We enumerate the obstacles in $F_\pm(\Lambda,R)$ in the obvious way with elements of $\ZZ^2$.
Then $r_\pi F_-(\Lambda, R)$ inherits an enumeration of its obstacles from the one given to $F_-(\Lambda,R)$.
We obtain the surface $\widetilde{M}(\Lambda,R)$ by gluing the left (resp.\ right) part of the obstacle numbered by $(m,n)$ of $F_+(\Lambda,R)$ to the right (resp.\ left) part of the obstacle numbered by $(m,n)$ of $r_\pi F_-(\Lambda,R)$.
Then $\widetilde{M}(\Lambda,R)$ is a translation surface in which the vertical foliation becomes orientable.
We define the vertical directional flow $\widetilde{\phi}_t^v$ as the flow that moves up, at unit speed, points along the leaves of the orientable foliation we obtained on $\widetilde{M}(\Lambda,R)$.
Choosing a fundamental domain for the $\Lambda$-action on $\widetilde{M}(\Lambda,R)$ one sees that this surface is a $\ZZ^2$-cover of a \emph{compact} translation surface, denoted $M(\Lambda,R)$, given by two flat tori glued along a slit, that comes from the obstacles in $F(\Lambda,R)$.

\begin{figure}[bt]
\centering
\def\svgwidth{0.95\textwidth}
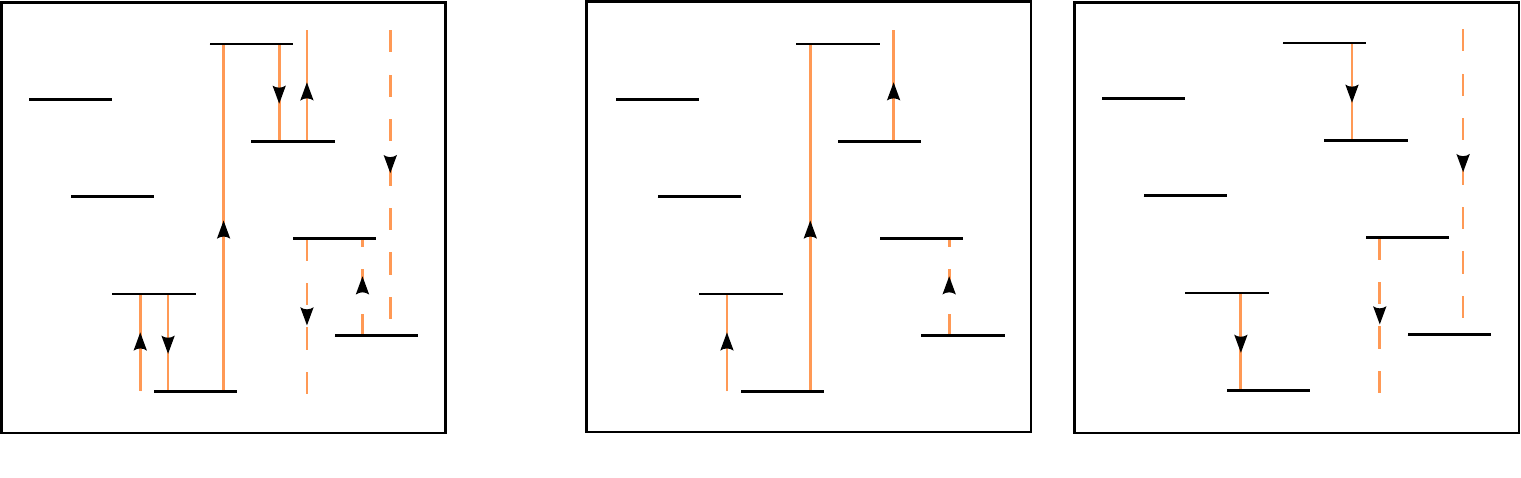
\caption{Two trajectories in a periodic configuration of flat lenses $F(\Lambda,R)$ and their images in $F_\pm(\Lambda,R)$.}
\label{fig:flatlenses}
\end{figure}

To prove our main result, we will study in depth the case when the two tori are obtained from the unit square in $\RR^2$, that is the case when $\Lambda=\ZZ^2$, and prove a result analogous to Theorem~\ref{thm:HausdorffEaton} for the $\ZZ^2$-cover $\widetilde{M}(\ZZ^2,R)$ of $M(\ZZ^2,R)$.
Namely we prove the following

\begin{thm}\label{thm:MR}
Let $0<R<1/2$ be a rational number with odd numerator.
Then there exists a set of directions $\theta$, \emph{explicitly given in terms of their continued fraction expansions}, with Hausdorff dimension bigger than $1/2$ such that the flow $\widetilde{\phi}_t^\theta$ is \emph{ergodic} on $\widetilde{M}(\ZZ^2,R)$.
Moreover, there exists a $G_\delta$ dense subset of $S^1$ on which the same happens.
\end{thm}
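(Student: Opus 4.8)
The plan is to realise the vertical flow $\tflow$ on $\widetilde M(\ZZ^2,R)$ as a $\ZZ^2$-skew product over the directional flow in direction $\theta$ on the \emph{compact} surface $M(\ZZ^2,R)$ — two unit tori glued along a horizontal slit of length $2R$, a genus-two surface (in the stratum $\cH(1,1)$) — and then to invoke K.~Schmidt's theory of essential values: the skew product is ergodic exactly when the base flow is ergodic, the defining $\ZZ^2$-valued cocycle is recurrent (i.e.\ the skew product is conservative), and its group of essential values is all of $\ZZ^2$. I would construct the set of directions $\theta$ digit by digit through the continued fraction expansion so as to force all three conditions, while keeping the constraints imposed on the digits sparse enough to retain a large dimension.

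First I would pass to a cross-section: choosing a horizontal transversal $I$ to the vertical flow on $M(\ZZ^2,R)$ disjoint from the slit, the first-return map is an interval exchange transformation whose only discontinuities come from the two endpoints of the slit, and the $\ZZ^2$-displacement between the sheets of $\widetilde M(\ZZ^2,R)\to M(\ZZ^2,R)$ becomes a piecewise-constant cocycle $\psi\colon I\to\ZZ^2$ over it. Running Rauzy--Veech induction on this family — which in our case essentially degenerates to the ordinary continued fraction algorithm of $\theta$ — renormalises $I$ and yields a sequence of Rokhlin towers; I would track the Birkhoff sums of $\psi$ over the successive tower bases. The arithmetic hypothesis on $R$ (rational with odd numerator) should enter here: I expect it to be exactly what makes $\psi$ \emph{primitive}, i.e.\ the integer span of its jump vectors is all of $\ZZ^2$ rather than a proper sublattice — a necessary condition for the essential value group to exhaust $\ZZ^2$.

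For the dynamical core I would prescribe the continued fraction digits $a_n$ of $\theta$ so that, simultaneously: (a) all ``free'' digits are bounded by a fixed constant $N$, keeping the towers balanced, so that $\tflow$ is uniquely ergodic on $M(\ZZ^2,R)$ and the Birkhoff sums of $\psi$ grow slowly enough for the cocycle to be recurrent by an Atkinson/Schmidt-type criterion; and (b) along a sparse, zero-density sequence of renormalisation times $n_1<n_2<\cdots$, a bounded block of (necessarily large) digits near each $n_k$ is prescribed so that the Birkhoff sum of $\psi$ over the current tower base cyclically realises each of $\pm e_1,\pm e_2$ while that base keeps a definite proportion of the total length. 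By the Rokhlin-tower form of the essential-values criterion, (b) makes each standard generator of $\ZZ^2$ an essential value, hence the essential value group is $\ZZ^2$; with (a) and ergodicity of the base, Schmidt's theorem then gives ergodicity of $\tflow$ on $\widetilde M(\ZZ^2,R)$. The $G_\delta$-dense statement follows by Baire category: having $e_i$ as an essential value is a countable intersection of conditions, each holding on a dense open set of directions — one can always append finitely many digits to force the required tower behaviour at a later scale — so the ergodic directions contain a dense $G_\delta\subset S^1$.

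Finally, the set $\Theta\subset S^1$ so constructed consists of numbers whose continued fraction digits lie in $\{1,\dots,N\}$ except at the zero-density times $n_k$, where a bounded block of digits is prescribed. Since the constrained positions have density zero and the free digits range over a full block $\{1,\dots,N\}$, the standard dimension bounds for continued fraction Cantor sets — the mass distribution principle applied to a product-like measure, equivalently the pressure equation of thermodynamic formalism — show $\dim_H\Theta$ can be pushed as close to $1$ as desired by enlarging $N$, in particular $\dim_H\Theta>1/2$. I regard step (b) as the main obstacle: one must show that a single, sparsely repeated combinatorial correction already generates a new essential value without spoiling the balance of the towers, thereby decoupling the essential-value mechanism from the bulk of the unconstrained, dimension-carrying digits — it is precisely this decoupling that lifts the dimension strictly above the value $1/2$ that the classical scheme (in which \emph{all} partial quotients are forced to grow) would give. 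A secondary delicate point is verifying recurrence of the $\ZZ^2$-cocycle, which lies exactly on the boundary between the recurrent and transient regimes.
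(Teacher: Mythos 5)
Your high-level frame agrees with the paper: realise $\tflow$ as a $\ZZ^2$-skew product over the directional flow on the compact slit-torus surface $M(\ZZ^2,R)\in\cH(1,1)$, get ergodicity of the base by a renormalisation/compactness argument (the paper uses Masur's criterion along the $h_\pm$-coding of the continued fraction), and conclude via Schmidt's essential-value theorem. But the step you yourself flag as ``the main obstacle'' --- your step (b), producing the essential values $\pm e_1,\pm e_2$ from a sparsely repeated bounded correction block --- is precisely where the entire content of the proof lives, and you have not supplied it. In the paper this is done geometrically rather than through Rokhlin towers: one writes down explicit ten-digit words $g_z(n)=h_+^{d-1}h_-h_+h_-^{d}h_+^{n}h_-^{a-1}h_+h_-h_+^{a}h_-^{n}$ that stabilise $M(z)$ \emph{and} act trivially on $H_1^{(0)}(M(z);\ZZ)$; the images of two explicit transverse cylinders under the corresponding affine maps then lift to strips with intersection vectors exactly $(\pm1,0)$ and $(0,\pm1)$, and the Hubert--Weiss ``well approximated by strips'' criterion converts these into essential values. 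The hard part is the bookkeeping of how $(h_\pm^n)_*$ acts on $H_1^{(0)}$ as a function of the position of the slit endpoint (Lemma~\ref{thm:keylemma} and Corollary~\ref{thm:homologyaction}); any tower-based version of your step (b) needs an equivalent computation, and nothing in your outline indicates how the homology/deck-transformation contribution of the \emph{unconstrained} digits between correction times would be controlled. Relatedly, you misplace the arithmetic hypothesis: the jump vectors of the cocycle already span $\ZZ^2$ for every $R$ (the horizontal and vertical cylinders of $M(z)$ itself intersect $\beta$ and $\alpha$ in $\pm1$), so ``primitivity'' is not the issue; oddness of the numerator is needed to solve the congruences $as\equiv q\pmod{2q}$, i.e.\ to move the slit endpoint to a corner of the fundamental domain, which is what forces the correction word to act trivially on $H_1^{(0)}$.

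Your dimension claim also outruns what the construction supports. In the paper the free parameters $n_i$ are forced into the arithmetic progression $8q\NN$ (so they are unbounded) and each $n_i$ occurs \emph{twice} in the continued fraction expansion; the resulting Cantor set has dimension only slightly above $1/2$, obtained from the divergence of $\sum_l e_{\bar a,\bar b,l}^{1/2}$ (Proposition~\ref{thm:Hausdorffdimension}). Your picture --- almost all digits free in $\{1,\dots,N\}$, corrections at density-zero times, hence $\dim_H$ close to $1$ --- is exactly the decoupling whose possibility is the unproven step (b), so it cannot be asserted; as written the theorem only needs, and the construction only yields, $\dim_H>1/2$. Two smaller points: recurrence of the cocycle is not a separate delicate issue, since any nontrivial essential value already forces conservativity, so that worry dissolves once (b) is done; and for the $G_\delta$ statement the density of each open condition is not automatic --- the paper obtains it from discreteness of the Veech group of $\widetilde M(z)_\Gamma$ (via Hooper--Weiss) together with the fact that its limit set is all of $\RR\PP^1$, so that the orbit of one good strip under the Veech group approximates every direction. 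Your Baire-category skeleton is right, but the density input must come from somewhere.
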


The forementioned Theorem~\ref{thm:HausdorffEaton} will then follow from this result by exploiting the action of $\SL(2,\RR)$ on $\sL$ by left multiplication, as we will explain in the last section.

\subsection{Outline of the paper}
We will recall all the basic definitions of translation surfaces and their coverings in section~\ref{sec:background}.
We then state a criterion for ergodicity on $\ZZ^2$-covers of compact translation surfaces, which we will use to produce ergodic directions for $\widetilde{M}(\ZZ^2,R)$.
In section~\ref{sec:ergodicdirections} we exploit this criterion under some hypothesis on the action of $\SL(2,\RR)$ on the homology of the compact surface $M(\ZZ^2,R)$.
This action is studied in detail in the following section to guarantee that our previous hypothesis are satisfied.
We summarise our results for $\widetilde{M}(\ZZ^2,R)$ and prove a more precise version of Theorem~\ref{thm:MR} in section~\ref{sec:proofMR}.
Finally, in section~\ref{sec:returntoeaton}, we prove Theorem~\ref{thm:HausdorffEaton} using the results of the previous sections.

\subsection{Relation with other results in the literature}\label{sec:literature}
As we mentioned above, the study of $\ZZ^d$-covers of compact translation surfaces has been a very active area of research in the last few years.
The reduction of periodic systems of identical Eaton lenses to this framework was explained by K.~Fr\k{a}czek and M.~Schmoll in~\cite{FraczekSchmoll}.
Our work is inspired by the approach of K.~Fr\k{a}czek and C.~Ulcigrai in~\cite{FraczekUlcigrai:tube}, where a similar strategy is used for a $\ZZ$-periodic infinite surface.

One could see the infinite translation surface $\widetilde{M}(\ZZ^2,R)$ as a degenerate case of the famous wind-tree model, that has been intensively studied recently, for instance in~\cite{AvilaHubert:recurrence,Delecroix:divergent,DelecroixZorich:Ehrenfest,FraczekUlcigrai:notergodic,Hooper:IET,HLT:Ehrenfest}.
Also for this model the generic behaviour of the directional flow is not ergodic, as shown in~\cite{FraczekUlcigrai:notergodic}.
It is worth to remark that in the wind-tree model the trajectories are not trapped in bands and hence there is no geometrically clear picture of the non-ergodic behaviour of the directional flow.
On the technical side the proof in~\cite{FraczekSchmoll} exploits the \emph{bounded deviations} phenomenon, discovered by A.~Zorich, which is not relevant for the wind-tree as the curves giving the $\ZZ^2$-cover in this case belong to two different blocks of the so-called \emph{Kontsevich--Zorich cocycle}.
 
Ergodic directions for the directional flow on the surface $\widetilde{M}(z)_\Gamma$, introduced in Section~\ref{sec:ergodicdirections}, can be obtained using the much more general work of P.~Hooper~\cite{Hooper:IET} in some specific examples, for instance when $z=\bigl(\frac{1}{4},0\bigr)$.
However, our approach is quite different.

Finally, let us point out that we study in detail a surface $M(z)$, made out of two identical tori glued along a slit, see Section~\ref{sec:ergodicdirections}.
This surface has a rich history, going back to the seminal work of W.~Veech.
For instance, it was used to produce examples of minimal but non-ergodic directions on compact translation surfaces, see the survey of H.~Masur and S.~Tabachnikov~\cite{MasurTabachnikov} for more details.
Moreover, it plays a fundamental role in~\cite{FraczekUlcigrai:tube}.

\section{Background}\label{sec:background}

\subsection{Translation surfaces}
In this section we recall the basic definitions related with compact translation surfaces and their $\ZZ^d$-coverings.
For more details on the compact case we refer to the surveys~\cite{Masur:ergodicsurfaces,MasurTabachnikov,Viana:ietf,Zorich:flat}.

A translation surface is a pair $(M,\omega)$, where $M$ is a compact Riemann surface and $\omega$ is a nonzero Abelian differential, that is a holomorphic 1-form.
Call $\Sigma\subset M$ the set of zeros of $\omega$.
These points are the \emph{singularities} of the translation surface.
For every angle $\theta\in S^1$ one defines a vector field $X_\theta^\omega$ in direction $\theta$ on the complement $M\setminus\Sigma$ of the singularities by $\omega(X_\theta^\omega)=e^{i\theta}$.
The corresponding flow will be denoted $\flow$ and is called directional flow or straight-line flow.
This flow preserves the natural area form on $M$ given by $\frac{i}{2}\omega\wedge\overline{\omega}$.
The total area of the surface, with respect to this area form, is denoted $A(\omega)$.

A \emph{saddle connection} on $M$ is a geodesic segment for the natural flat metric of the surface that connects two singularities, not necessarily distinct, and without any other singularity in its interior.
To each curve $\gamma$ we can associate a \emph{displacement} (or holonomy) \emph{vector} obtained developing the curve from $M$ to $\RR^2$ and then taking the difference between the final and initial points on the Euclidean geodesic.
Identifying $\RR^2$ with $\CC$ one has $\hol(\gamma)=\int_\gamma \omega$.
A \emph{cylinder} $C\subset M$ is a maximal connected union of simple closed geodesics all of which are homotopic one to the other.
A closed geodesic in a cylinder is called a \emph{core curve} of the cylinder itself, and its length is called the \emph{width} of $C$.

The moduli space of compact translation surfaces of fixed genus $g$ and with the same number and order of singularities $\kappa_1,\dots,\kappa_s$ is called a \emph{stratum} and is denoted $\cH(\kappa_1,\dots,\kappa_s)$.
The genus is univocally determined by the well-known formula for zeros of  holomorphic 1-forms on a compact Riemann surface $\sum_i \kappa_i=2g-2$.

There is a natural action of the group $\GL^+(2,\RR)$ on translation surfaces, given by post-composition with the local charts.
The action of an element $g\in\GL^+(2,\RR)$ on $(M,\omega)$ will be denoted $g\cdot(M,\omega)$.
In particular, we will be interested in the \emph{Teichm\"uller geodesic flow}, that is the action of the group of diagonal matrices $G_t=\operatorname{diag}(e^t,e^{-t})$, for $t\in\RR$, and the one given by rotations
\[
	r_\theta = \rot{\theta}.
\]
Since the action of $\GL^+(2,\RR)$ preserves the topological structure of the surface it can be restricted to an action of each stratum $\cH(\kappa_1,\dots,\kappa_s)$.

The group $\operatorname{Aff}^+(M,\omega)$ of \emph{affine automorphisms} of a translation surface is the group of all orientation preserving homeomorphisms that map singular points to singular points, are diffeomorphisms on $M\setminus\Sigma$ and are affine on the same set with respect to the coordinates given by locally integrating $\omega$.
Under the identification between tangent planes at points $p\in M\setminus\Sigma$ and $\RR^2$ given by the local coordinates, the derivative of any affine automorphism coincides with its linear part and is a constant $2 \times 2$ real matrix.
Any affine diffeomorphism preserves the area of the surface and hence its derivative has determinant $1$.
We thus have a well defined map $D\colon\operatorname{Aff}^+(M,\omega)\to\SL(2,\RR)$.
The image of this map is called the \emph{Veech group} of $M$.
The kernel of this map is the group of \emph{translation equivalences}, that is affine automorphisms whose derivative is $\id$.
Two translation surfaces are said to be \emph{translation equivalent} if there is an affine diffeomorphism between the two whose derivative is the identity matrix.

Given a translation surface $(M,\omega)$ a \emph{translation cover} $(\widetilde{M},\widetilde{\omega})$ of $(M,\omega)$ is a cover $p\colon\widetilde{M}\to M$ such that $\widetilde{M}$ is a translation surface, $\widetilde{\omega}=p^*(\omega)$ and the covering map $p$ is locally given by translations in $\widetilde{M}\setminus p^{-1}(\Sigma)$.
Since $(\widetilde{M},\widetilde{\omega})$ is a translation surface, on it we can define the straight-line flow $\tflow$ in direction $\theta$.

Following~\cite{HooperWeiss}, one can give a more concrete definition of a translation cover in the case when the covering group is $\ZZ^2$, in other words, the surface $\widetilde{M}/\ZZ^2$ is homeomorphic to $M$.
In this case all $\ZZ^2$-covers of a compact connected translation surface $(M, \omega)$ are in one-to-one correspondence, up to isomorphism, with linearly independent pairs of absolutely homology classes $(\gamma_1, \gamma_2) \in H_1 (M; \ZZ)^2$.
We write $\Gamma = (\gamma_1,\gamma_2)$ for such a pair and denote the covering surface with $(\widetilde{M}_\Gamma,\widetilde{\omega})$. 
If we denote the algebraic intersection form on $M$ with $\langle\blank, \blank\rangle \colon H_1(M;\ZZ) \times H_1(M;\ZZ) \to\ZZ$, then the lift of a closed curve $\sigma$ on the surface $M$ is a path $\widetilde{\sigma} \colon [t_0,t_1] \to \widetilde{M}_\Gamma$ such that $\widetilde{\sigma}(t_1)=(n_1,n_2)\cdot\widetilde{\sigma}(t_0)$, where $(n_1,n_2)=\bigl( \langle\gamma_1,[\sigma]\rangle, \langle\gamma_2,[\sigma]\rangle \bigr)\in\ZZ^2$ and $\cdot$ is the $\ZZ^2$-action by deck transformations on $(\widetilde{M}_\Gamma, \omega_\Gamma)$.

A necessary condition, see~\cite{AvilaHubert:recurrence}, for recurrence of the flow $\tflow$ is the \emph{no-drift condition}, that is
\[
	\hol(\gamma_i)=\int_{\gamma_i} \omega= 0, \qquad \text{ for } i=1,2.
\]
For $\ZZ$-covers, as P.~Hooper and B.~Weiss showed in~\cite{HooperWeiss}, under the no-drift condition recurrence of $\tflow$ is a consequence of general principles: ergodicity of the flow $\flow$ on $M$ implies recurrence of $\tflow$ on $\widetilde{M}_\gamma$.
However, for $\ZZ^2$-covers this is not true, as shown by V.~Delecroix in~\cite{Delecroix:divergent}.
In the following, we will always assume that the no-drift condition is satisfied.
The group of homology classes having zero holonomy will be denoted $H_1^{(0)}(M;\ZZ)$.

\subsection{Cocycles and essential values}
We now recall some definitions from infinite ergodic theory that we are going to need in order to state our criterion for ergodicity of $\tflow$ on a $\ZZ^2$-cover of a compact translation surface.
For more details on the subject we refer to~\parencite[][\textsection8.1-8.2]{Aaronson:infinite} or~\cite{Schmidt:cocycles}.

Let $F_t\colon X\to X$ be a flow on the measure space $(X,\mu)$, where $\mu$ is a non-atomic probability measure which is invariant under the action of the flow.
A measurable \emph{cocycle} $\alpha\colon X\times\RR\to\ZZ^2$ is a function satisfying
\[
	\alpha(x,t+s)=\alpha(x,s)+\alpha(F_tx,s),
\]
for every $x\in X$ and $t$ and $s$ in $\RR$.
With these two ingredients we define a flow $\widetilde{F}_t$ on the space $\widetilde{X}=X\times\ZZ^2$, equipped with the natural measure, setting
\[
	\widetilde{F}_t(x,n)=(F_tx,n+\alpha(F_tx,n)).
\]
This flow is called a \emph{$\ZZ^2$-valued skew-product} of $F_t$.
An element $(n_1,n_2)\in\ZZ^2$ is called an \emph{essential value} for the skew-product if, for any measurable set $A\subset X$ of positive measure, there is a set of times $t$ with positive measure such that
\[
	\mu\bigl(\Set{x\in A : F_tx\in A, \alpha(x,t)=(n_1,n_2)}\bigr)>0.
\]
One can show that the set of essential values is a subgroup of $\ZZ^2$.
We will use the following Theorem, due to K.~Schmidt~\parencite[][Corollary~5.4]{Schmidt:cocycles}.

\begin{thm}\label{thm:Schmidt}
Let $F_t$ be an ergodic flow on a non-atomic measure space $(X,\mu)$ and let $\alpha\colon X\times\RR\to\ZZ^2$ be a cocycle.
Then the skew-product $\widetilde{F}_t$ is ergodic if and only if the set of essential values coincides with $\ZZ^2$.
\end{thm}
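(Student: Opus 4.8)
This is a classical theorem of K.~Schmidt \cite{Schmidt:cocycles}; I outline how the proof runs. The plan is to establish the two implications separately. Throughout, $\widetilde{X}=X\times\ZZ^2$ carries the product of $\mu$ with the counting measure, a $\sigma$-finite $\widetilde{F}_t$-invariant measure, and for a measurable set $\widetilde{B}\subset\widetilde{X}$ I write $B_n=\{x\in X:(x,n)\in\widetilde{B}\}$ for its fibres; these are measurable by Fubini, and invariance of $\widetilde{B}$ under $\widetilde{F}_t$ is equivalent to the family of relations $x\in B_n\iff F_tx\in B_{n+\alpha(x,t)}$, valid for every $t$ and $\mu$-a.e.\ $x$.

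\emph{Full essential values imply ergodicity.} Let $\widetilde{B}\subset\widetilde{X}$ be $\widetilde{F}_t$-invariant with positive measure; the goal is that $\widetilde{B}$ be conull. The core step is the claim that, \emph{whenever $a$ is an essential value, $\mu(B_n\triangle B_{n-a})=0$ for every $n$}. Indeed, if $\mu(B_n\setminus B_{n-a})>0$ for some $n$ (the reverse one-sided difference being symmetric), set $A=B_n\setminus B_{n-a}$ and apply the essential-value property of $-a$ --- legitimate, since the essential values form a group --- to $A$: one gets a time $t$ and a set $A'\subset A$ with $\mu(A')>0$ such that $F_tx\in A$ and $\alpha(x,t)=-a$ for all $x\in A'$. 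For such $x$, the invariance relation forces $F_tx\in B_{n+\alpha(x,t)}=B_{n-a}$, while $F_tx\in A=B_n\setminus B_{n-a}$ forces $F_tx\notin B_{n-a}$, a contradiction. Since by hypothesis the essential values exhaust $\ZZ^2$, taking $a=n$ gives $B_n=B_0=:C$ mod $\mu$ for all $n$, hence $\widetilde{B}=C\times\ZZ^2$ up to a null set; the invariance relation then reduces to $x\in C\iff F_tx\in C$, so $C$ is $F_t$-invariant and therefore null or conull by ergodicity of $F_t$. As $\mu(\widetilde{B})>0$ this yields $\mu(C)=1$, so $\widetilde{B}$ is conull and $\widetilde{F}_t$ is ergodic.

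\emph{Ergodicity implies full essential values.} I would argue by contraposition: supposing the essential-value group is a proper subgroup $H\subset\ZZ^2$, I produce a non-constant $\widetilde{F}_t$-invariant function. First one observes that $\widetilde{F}_t$ is conservative --- this uses that $F_t$ preserves the probability measure $\mu$, which prevents an ergodic skew-product over it from being totally dissipative --- so that Schmidt's structure theory for cocycles applies. Composing $\alpha$ with the quotient $\ZZ^2\to\ZZ^2/H$ gives a cocycle $\bar\alpha\colon X\times\RR\to\ZZ^2/H$ with trivial essential-value group; by the structure theorem for such cocycles \cite{Schmidt:cocycles}, $\bar\alpha$ is a coboundary, say $\bar\alpha(x,t)=\phi(F_tx)-\phi(x)$ for a measurable $\phi\colon X\to\ZZ^2/H$. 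Then $\psi(x,n):=\phi(x)-(n+H)$ is a measurable map $\widetilde{X}\to\ZZ^2/H$ with $\psi\circ\widetilde{F}_t=\psi$, and since $n\mapsto n+H$ surjects onto the non-trivial group $\ZZ^2/H$, the function $\psi$ is non-constant, contradicting ergodicity of $\widetilde{F}_t$. Hence $H=\ZZ^2$.

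\emph{Expected main obstacle.} The implication ``full essential values $\Rightarrow$ ergodic'' is essentially bookkeeping once the fibre decomposition is set up; the real content lies in the converse, and concretely in the structure theorem that a cocycle with trivial essential-value group (over an ergodic base, with conservative skew-product) is a coboundary --- equivalently, in the delicate measure-theoretic step, special to flows, of upgrading ``$\mu$-a.e.\ base point eventually returns to a given set realising a prescribed cocycle value'' to ``a positive-measure set of times does so simultaneously for a positive-measure set of points.'' I would not reprove this, but quote it from Schmidt's monograph \cite{Schmidt:cocycles}; everything else is a formal manipulation of the skew-product structure.
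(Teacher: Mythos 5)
The paper does not prove this statement at all: it is imported verbatim as Corollary~5.4 of Schmidt's monograph \cite{Schmidt:cocycles}, so there is no in-paper argument to compare against. Your reconstruction is sound and follows the standard lines. The direction ``$E(\alpha)=\ZZ^2\Rightarrow$ ergodic'' is complete as written: the fibre bookkeeping, the use of the group property to pass from $a$ to $-a$, and the reduction to an $F_t$-invariant set $C$ are all correct. Two remarks on the converse. First, your justification for conservativity is slightly off: preservation of a probability measure by the base does \emph{not} by itself prevent a $\ZZ^2$-valued skew product from being dissipative --- Delecroix's example, cited in this very paper~\cite{Delecroix:divergent}, is a zero-drift $\ZZ^2$-cocycle over a uniquely ergodic base whose skew product is divergent. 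What saves you is that you are arguing by contradiction and may assume $\widetilde{F}_t$ ergodic; an ergodic measure-preserving flow of a non-atomic $\sigma$-finite space is conservative (the only ergodic dissipative flow is the translation flow on $\RR$, which is not isomorphic to $\widetilde{F}_t$ here), and conservativity passes to the quotient cocycle $\bar\alpha$. You should state it that way. Second, the detour through the structure theorem (recurrent cocycle with trivial essential values is a coboundary, plus the identity $E(p\circ\alpha)=p(E(\alpha))$ for $\ker p\subset E(\alpha)$) is heavier machinery than the statement requires: a lighter route is to take $a\notin E(\alpha)$, witnessed by a set $A$ with no returns realising the value $-a$, and observe that the $\widetilde{F}_t$-saturation of $A\times\{(0,0)\}$ is an invariant set of positive measure which, were $\widetilde{F}_t$ ergodic, would have to contain $A\times\{a\}$ mod~$0$ --- and a Fubini argument on $\{(x,t):x\in A,\ F_tx\in A,\ \alpha(x,t)=-a\}$ then contradicts the choice of $A$. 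This is the ``delicate measure-theoretic step special to flows'' you flag, and it is indeed the only place where real care is needed; deferring it to \cite{Schmidt:cocycles}, as the paper itself does for the whole theorem, is legitimate.
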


Given a $\ZZ^2$-cover $\widetilde{M}_\Gamma$ of a compact translation surface $M$ determined by two linearly independent curves $\gamma_1$ and $\gamma_2$ in $H_1^{(0)}(M;\ZZ)$, we can realise the directional flow $\widetilde{\phi}_t^\theta$ as a skew-product of  the flow $\phi_t^\theta$ on $M$ in the following way.
Choose an arbitrary point $\bar{x}\in M$ and, for every other point $x$ choose a continuous path $\gamma_{x,\bar{x}}$ from $x$ to $\bar{x}$.
Moreover, write $\gamma_{\bar{x},x}$ for the path that has the same image and opposite orientation.
Then define the $\ZZ^2$-valued cocycle $\alpha$ by
\[
	\alpha(x,t)=\bigl(\langle\gamma_1,\eta_{x,t}\rangle,\langle\gamma_2,\eta_{x,t}\rangle\bigr),
\]
where $\eta_{x,t}$ is the (homology class of the) closed path that connects $\bar{x}$ to $x$ along $\gamma_{\bar{x},x}$, then flows $x$ in direction $\theta$ for time $t$ up to $\phi_t^\theta(x)$ and finally closes up along $\gamma_{\phi_t^\theta(x),\bar{x}}$.
One can easily verify that $\alpha$ is indeed a cocycle and that the skew-product of $\phi_t^\theta$ over $\alpha$ is measurably equivalent to the flow $\widetilde{\phi}_t^\theta$.
Different choices in the definition of $\alpha$ lead to different skew-products, which are all measurably isomorphic to each other.

\section{Ergodicity Criterion}\label{sec:ergodicitycriterion}
In this section we prove a criterion for ergodicity of the directional flow $\widetilde{\phi}_t^\theta$ on a $\ZZ^2$-cover of a compact translation surface.
Our criterion is a generalisation of the one proven by P.~Hubert and B.~Weiss in~\cite{HubertWeiss}.

If $C$ is a cylinder in the compact surface $M$, we denote with $\delta(C)\in H_1(M;\ZZ)$ the homology class of a core curve of $C$.
We write $k(C)=(\langle\gamma_1,\delta(C)\rangle, \langle\gamma_2,\delta(C)\rangle)\in\ZZ^2$ and $v(C)$ for the displacement vector of $\delta(C)$.
Finally, let $A(C)$ be the area of the cylinder $C$.
From the description of the $\ZZ^2$-cover we gave above it follows that, if $k(C)\neq(0,0)$, then the lift $\widetilde{C}$ of $C$ to $\widetilde{M}$ is an infinite strip.
We recall the following Definition, first introduced in~\cite{HubertWeiss}.

\begin{definition}\label{def:wellapproximated}
A direction $\theta\in S^1$ is \emph{well approximated by strips} if there are $\epsilon>0$, $k_\theta\neq(0,0)$ and infinitely many strips $\widetilde{C}\subset\widetilde{M}$ such that $k(C)\equiv k_\theta$, $A(C)>\epsilon$ and
\[
	\abs{(\cos\theta,\sin\theta)\wedge v(C)} \leq (1-\epsilon)\frac{A(C)}{2\norm{v(C)}}.
\]
\end{definition}

Well approximated directions are related with essential values.
More precisely one has the following

\begin{prop}\label{thm:stripsessentialvalues}
Suppose $\theta\in S^1$ is a direction that is well approximated by strips.
Then $k_\theta$ is an \emph{essential value} for the straight-line flow $\tflow$ on $\widetilde{M}$.
\end{prop}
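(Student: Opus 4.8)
The plan is to show that the geometric hypothesis "well approximated by strips" forces the cocycle $\alpha$ realising $\tflow$ as a skew-product over $\flow$ to take the value $k_\theta$ on a set of positive measure infinitely often, which is exactly the definition of an essential value. Fix $\epsilon>0$, $k_\theta\neq(0,0)$ and the infinitely many strips $\widetilde C\subset\widetilde M$ with $k(C)\equiv k_\theta$, $A(C)>\epsilon$ provided by Definition~\ref{def:wellapproximated}. The key observation is that each cylinder $C$ in $M$ with core curve homology class $\delta(C)$ and displacement vector $v(C)$ contains, after flowing in direction $\theta$, a large family of orbit segments that "wind once around" the cylinder, closing up to a curve homologous to $\delta(C)$; for each such segment the cocycle increment $\alpha(x,t)$ equals $(\langle\gamma_1,\delta(C)\rangle,\langle\gamma_2,\delta(C)\rangle)=k(C)=k_\theta$. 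So the task reduces to estimating the measure of the set of starting points in $C$ for which the direction-$\theta$ orbit returns to $C$ after traversing it once, staying inside $C$ the whole time.

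First I would set up coordinates adapted to the cylinder: write $C$ as a rectangle $\RR^2/(v(C)\ZZ)$ of circumference $\ell=\norm{v(C)}$ and height $h=A(C)/\ell$, with the core direction along $v(C)$. An orbit in direction $\theta$ crosses $C$ "the short way" (from one boundary component toward the other) and simultaneously drifts in the core direction; it makes a full loop in the core direction before exiting the cylinder precisely when the drift over the crossing time exceeds $\ell$, equivalently when the angle between $\theta$ and the core direction is small relative to $h/\ell$. A direct trigonometric computation shows that the transversal measure of starting points on one boundary of $C$ whose orbit wraps once around (returning to $C$) and for which the cocycle picks up exactly $k_\theta$ is controlled from below by a quantity of the form $A(C) - 2\norm{v(C)}\,\abs{(\cos\theta,\sin\theta)\wedge v(C)}$, which by the inequality in Definition~\ref{def:wellapproximated} is at least $\epsilon\,A(C)>\epsilon^2$. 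Hence each strip $\widetilde C$ contributes a subset of $M$, of measure bounded below by a constant $c=c(\epsilon)>0$ independent of the strip, consisting of points $x$ together with a return time $t=t(x)$ such that $\flow x$ returns to a fixed flow-box, and $\alpha(x,t)=k_\theta$.

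Now I would invoke the hypothesis that $\flow$ is \emph{ergodic} on $(M,\text{area})$. Given an arbitrary measurable $A\subset M$ with positive measure, ergodicity (via a Fubini/Borel–Cantelli argument over the times swept out by the crossings of the infinitely many strips, or more cleanly via a Halmos recurrence / density-of-orbits argument) forces $A$ to meet a definite proportion of the "winding sets" constructed above for infinitely many of the strips $\widetilde C$; more precisely, for infinitely many $\widetilde C$ the set of $x\in A$ with $\flow x\in A$ and $\alpha(x,t)=k_\theta$ for the associated return time has positive measure. Since each such strip realises the cocycle value $k_\theta$, and there are infinitely many of them so that the relevant set of times $t$ has positive measure, this is exactly the statement that $k_\theta$ is an essential value for $\tflow$, completing the proof. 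The main obstacle is the quantitative step: making the lower bound on the measure of the "wrap-around" set in each cylinder uniform and correctly aligned with the wedge-product inequality of Definition~\ref{def:wellapproximated}, and then correctly coupling it with ergodicity of the base flow so that one genuinely obtains a positive-measure set of return times rather than just positive measure for a single time — this is where the argument of Hubert–Weiss~\cite{HubertWeiss} has to be adapted to the $\ZZ^2$-valued (rather than $\ZZ$-valued) setting, though the cylinder computation itself is essentially the same since only the single class $k_\theta$ is involved.
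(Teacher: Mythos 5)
Your geometric setup is the same one the paper relies on: the paper's proof is essentially a citation of Claim~12 of Hubert--Weiss, recording only the modification needed for $\ZZ^2$-covers (rectangles with opposite corners at $x$ and $k_\theta\cdot x$, where $\cdot$ is now the $\ZZ^2$-action), and your winding-orbit picture inside each approximating cylinder, with the lower bound $A(C)-2\norm{v(C)}\,\abs{(\cos\theta,\sin\theta)\wedge v(C)}\geq\epsilon A(C)>\epsilon^2$ coming straight from the wedge-product inequality of Definition~\ref{def:wellapproximated}, is exactly the content of that rectangle construction. (Minor point: ergodicity of $\flow$ on $M$ is not a hypothesis of the Proposition as stated; it is supplied separately in Proposition~\ref{thm:criterion}, where this result is used.)

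There is, however, a genuine gap in your last step. Knowing that an arbitrary positive-measure set $A$ meets the winding set $W_n\subset C_n$ in positive measure only produces points $x\in A$ with $\alpha(x,t_n(x))=k_\theta$; it says nothing about where $\phi^\theta_{t_n(x)}x$ lands, and ``ergodicity forces $A$ to meet a definite proportion of the winding sets'' does not by itself put the return point back into $A$. The missing idea is that infinitely many distinct cylinders with $A(C_n)>\epsilon$ force $\norm{v(C_n)}\to\infty$, hence the widths $A(C_n)/\norm{v(C_n)}$ tend to $0$, so a winding orbit returns to within that width of its starting point: the two short sides of the Hubert--Weiss rectangle project to essentially the same transversal segment through the same point of $M$. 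Only then does a Lebesgue density argument close the loop: take a flow-box $B$ with $\mu(A\cap B)>(1-\delta)\mu(B)$ for $\delta\ll\epsilon^2$, use equidistribution of the long orbit segments constituting $W_n$ to get $\mu(W_n\cap B)$ bounded below by a fixed multiple of $\mu(B)$, and use measure preservation to discard the points whose return lands in $B\setminus A$; a Fubini argument in the flow direction (the return time varies affinely along orbits, so it sweeps out an interval of values each attained on a positive-measure set) then upgrades a single return time to a positive-measure set of times $t$, as the definition of essential value requires. You correctly flag this coupling as ``the main obstacle'', but it is the actual substance of the proof rather than a routine adaptation; the passage from $\ZZ$ to $\ZZ^2$, by contrast, really is cosmetic, exactly as the paper asserts.
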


The Proposition is proved in~\cite{HubertWeiss} for the case of $\ZZ$-covers, see their Claim~12.
The proof holds verbatim for $\ZZ^2$-covers, with the obvious modifications.
In particular, they construct embedded rectangles on the $\ZZ$-cover with sides in direction $\theta$ and $\theta+\frac{\pi}{2}$ and opposite corners at the points $x$ and $k\cdot x$, where $k\in\ZZ$, analogously to $k_\theta$ in Definition~\ref{def:wellapproximated}, is the value of the algebraic intersection form between a core curve of a cylinder and the curve giving the $\ZZ$-cover, and $\cdot$ represents the $\ZZ$-action on the cover itself.
We modify that definition asking the corners of rectangles to be placed at $x$ and $k_\theta\cdot x$, where $\cdot$ now represents the $\ZZ^2$ action on $\widetilde{M}_\Gamma$.

We will sometimes say that a sequence of strips $\widetilde{C_n}$ well approximating a direction $\theta\in S^1$ \emph{produces} the essential value $k_\theta$.
We can finally state our ergodicity criterion.

\begin{prop}[Ergodicity Criterion]\label{thm:criterion}
Let $\theta\in S^1$ be an ergodic direction for the directional flow on the compact translation surface $(M,\omega)$.
If $\theta$ is well approximated by two sequences of strips with $k_\theta^h\equiv(\pm1,0)$ and $k_\theta^v\equiv(0,\pm1)$, then the flow $\widetilde{\phi}_t^\theta$ on the surface $(\widetilde{M}_\Gamma,\widetilde{\omega})$ is \emph{ergodic}.
\end{prop}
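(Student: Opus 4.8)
The plan is to combine Proposition~\ref{thm:stripsessentialvalues} with Schmidt's theorem (Theorem~\ref{thm:Schmidt}). First I would recall that, since $\theta$ is an ergodic direction for the directional flow $\phi_t^\theta$ on the compact surface $(M,\omega)$, the base flow of the skew-product description of $\widetilde{\phi}_t^\theta$ is ergodic on $(M,\mathrm{Leb})$; this is exactly the hypothesis needed to apply Theorem~\ref{thm:Schmidt}, which says that $\widetilde{\phi}_t^\theta$ is ergodic if and only if the group $E$ of essential values of the cocycle $\alpha$ equals all of $\ZZ^2$. So the whole task reduces to showing $E=\ZZ^2$.

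Next I would feed in the two sequences of strips. By hypothesis $\theta$ is well approximated by a sequence of strips $\widetilde{C}_n^h$ with $k(C_n^h)\equiv k_\theta^h=(\pm1,0)$ and by another sequence $\widetilde{C}_n^v$ with $k(C_n^v)\equiv k_\theta^v=(0,\pm1)$. Applying Proposition~\ref{thm:stripsessentialvalues} to each sequence separately, I conclude that both $k_\theta^h$ and $k_\theta^v$ are essential values, i.e. $(\pm1,0)\in E$ and $(0,\pm1)\in E$. (Here I should note that, although Definition~\ref{def:wellapproximated} allows the common value $k_\theta$ to be only one of $(\pm1,0)$, the sign is irrelevant since $E$ is a subgroup and $(1,0)\in E\iff(-1,0)\in E$; same for the vertical one.)

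Finally I would invoke the fact, recalled just before Theorem~\ref{thm:Schmidt}, that the set of essential values is a \emph{subgroup} of $\ZZ^2$. Since $E$ is a subgroup containing $(1,0)$ and $(0,1)$ — equivalently $(\pm1,0)$ and $(0,\pm1)$ — it contains the subgroup they generate, which is all of $\ZZ^2$; hence $E=\ZZ^2$. By Theorem~\ref{thm:Schmidt} the skew-product, and therefore the measurably isomorphic flow $\widetilde{\phi}_t^\theta$ on $(\widetilde{M}_\Gamma,\widetilde{\omega})$, is ergodic. \qed

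There is essentially no technical obstacle here: the statement is a clean corollary of the two cited results, and the only point requiring a word of care is the passage from the essential values $(\pm1,0)$ and $(0,\pm1)$ to the whole group $\ZZ^2$ via the subgroup property. All the real work — verifying that the relevant $\theta$ are genuinely well approximated by strips with the prescribed intersection data, and that such $\theta$ form a set of large Hausdorff dimension — is deferred to the later sections and is not part of this proof.
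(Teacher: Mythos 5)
Your proof is correct and follows exactly the same route as the paper: apply Proposition~\ref{thm:stripsessentialvalues} to each sequence of strips to get the essential values $(\pm1,0)$ and $(0,\pm1)$, use the subgroup property of the set of essential values to conclude it is all of $\ZZ^2$, and then invoke Theorem~\ref{thm:Schmidt} with the ergodicity of the base flow. The remark about the irrelevance of the sign is a sensible (if minor) point of care that the paper glosses over.
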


\begin{proof}
Let $\widetilde{C_n^h}$ and $\widetilde{C_n^v}$ be two sequences of ``horizontal'' and ``vertical'' strips that well approximate $\theta$.
If $p\colon\widetilde{M}\to M$ is the covering map, write $C_n^h=p\Bigl(\widetilde{C_n^h}\Bigr)$ and $C_n^v=p\Bigl(\widetilde{C_n^v}\Bigr)$.
Suppose we have $k(C_n^h)=k_\theta^h\equiv(\pm1,0)$ and $k(C_n^v)=k_\theta^v\equiv(0,\pm1)$.
Then, by Proposition~\ref{thm:stripsessentialvalues}, $k_\theta^h$ and $k_\theta^v$ are essential values for the skew-product $\widetilde{\phi}_t^\theta$.
Since, as we recalled earlier, the set of essential values is a closed subgroup of $\ZZ^2$ and we have shown that two generators of this group are essential values, the conclusion now follows directly from Theorem~\ref{thm:Schmidt}.
\end{proof}

To show that the translation flow $\flow$ on the compact surface $(M,\omega)$ is ergodic, as in the hypothesis of our ergodicity criterion, we will use a classical result, due to H.~Masur~\cite{Masur:criterion}.

\begin{thm}[Masur's criterion~\cite{Masur:criterion}]\label{thm:Masurcriterion}
Let $(M,\omega)\in\cH(\kappa_1,\dots,\kappa_s)$ be a compact translation surface.
Let $g\in\SL(2,\RR)$ a matrix that sends the direction $\theta$ on the vertical direction.
Suppose that there exist a bounded subset $B\subset\cH(\kappa_1,\dots,\kappa_s)$ and a sequence of times $t_n\to+\infty$ such that $G_{t_n}(g\cdot(M,\omega))\in B$ for all $n\in\NN$.
Then the directional flow $\flow$ on $(M,\omega)$ is \emph{uniquely ergodic}.
\end{thm}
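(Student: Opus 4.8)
The plan is to prove the contrapositive of Masur's criterion: if the directional flow $\flow$ on $(M,\omega)$ is \emph{not} uniquely ergodic, then the forward Teichm\"uller orbit $t\mapsto G_t\bigl(g\cdot(M,\omega)\bigr)$ eventually leaves every compact subset of $\cH(\kappa_1,\dots,\kappa_s)$; since a bounded set is contained in a compact one, this contradicts the existence of the sequence $t_n\to+\infty$ with $G_{t_n}\bigl(g\cdot(M,\omega)\bigr)\in B$, and hence forces unique ergodicity. First I would reduce to the vertical direction: the affine diffeomorphism underlying the $\GL^+(2,\RR)$-action of $g$ conjugates $\flow$ on $(M,\omega)$ to the vertical straight-line flow on $(N,\eta):=g\cdot(M,\omega)$, so the two flows share all their ergodic properties, and in particular the vertical flow on $(N,\eta)$ is not uniquely ergodic. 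I would also isolate the only property of $B$ that is used: because $B$ is bounded, there is $\delta_0>0$ such that no surface in $B$ has a saddle connection of length less than $\delta_0$. This is a Mahler-type compactness fact --- the area $A(\eta)$ is constant along the Teichm\"uller orbit, so in a stratum with fixed area the sole mechanism for escaping to infinity is the appearance of arbitrarily short saddle connections.

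The core of the argument is to manufacture, out of the failure of unique ergodicity, a sequence of ``nearly vertical'' saddle connections. Fixing a horizontal segment that meets every vertical leaf, the first-return map is an interval exchange transformation which, after passing to a minimal component, is minimal but not uniquely ergodic. For such an exchange the Rauzy--Veech renormalisation fails to stay balanced: Rauzy induction produces a nested sequence of subintervals whose lengths tend to $0$, whose associated first-return cycles $\gamma_n$ on $(N,\eta)$ are saddle connections (or closed curves) with holonomy vectors $\hol(\gamma_n)=(x_n,y_n)$ satisfying
\[
	x_n\longrightarrow 0, \qquad \frac{\abs{y_n}}{\abs{x_n}}\longrightarrow\infty, \qquad \abs{x_n y_n}\longrightarrow 0 .
\]
Indeed $\abs{x_n}$ is the length of the $n$-th subinterval and $y_n$ a corresponding return time; were the exchange uniquely ergodic these quantities would stay comparable (this is the content of the Boshernitzan/Veech criterion), so their degeneration is precisely the quantitative face of non-unique ergodicity, while the last relation merely reflects that the ``tube'' around $\gamma_n$ has area at most $A(\eta)$. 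The only consequence I actually need is: there are saddle connections $\gamma_n$ and times $s_n\to+\infty$ such that the $G_{s_n}$-length of $\gamma_n$ tends to $0$.

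Finally I would close the loop with a one-line computation. Set $s_n=\tfrac12\log\bigl(\abs{y_n}/\abs{x_n}\bigr)$, which tends to $+\infty$; on $G_{s_n}(N,\eta)$ the saddle connection $\gamma_n$ has holonomy $\bigl(e^{s_n}x_n,\,e^{-s_n}y_n\bigr)$, each coordinate of modulus $\sqrt{\abs{x_n y_n}}\to 0$. Hence for $n$ large $G_{s_n}(N,\eta)$ carries a saddle connection shorter than $\delta_0$, so $G_{s_n}(N,\eta)\notin B$, and in fact the whole forward orbit $t\mapsto G_t(N,\eta)$ eventually exits every fixed compact set --- the desired contradiction, which yields unique ergodicity of $\flow$ on $(M,\omega)$.

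The step I expect to be the genuine obstacle is the middle one: extracting from the abstract failure of unique ergodicity an explicit sequence of saddle connections that become short at \emph{some} scale of the Teichm\"uller flow is exactly the content of Masur's theorem, and it requires either the combinatorics of Rauzy induction (as sketched above) or a direct geometric construction --- for instance, taking two distinct ergodic transverse invariant measures to the vertical foliation, their difference is a non-trivial relative cohomology class on $N$ which one localises onto thin, almost vertical subsurfaces to produce the cycles $\gamma_n$. By contrast, the reduction to the vertical direction and the final Teichm\"uller-flow estimate are routine.
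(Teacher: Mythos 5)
The paper does not prove this statement: it is quoted as a classical theorem of Masur and used as a black box, so there is no internal proof to compare against. Judged on its own terms, your outline follows the standard route (contrapositive, Mahler-type compactness for strata, short saddle connections), and the peripheral steps --- the conjugation by $g$, the characterisation of bounded subsets of $\cH(\kappa_1,\dots,\kappa_s)$ with fixed area by a lower bound $\delta_0$ on saddle-connection length, and the closing computation with $s_n=\tfrac12\log\bigl(\abs{y_n}/\abs{x_n}\bigr)$ --- are all sound. But the argument has two genuine gaps.

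First, the decisive estimate $\abs{x_ny_n}\to0$ is never established. The tube-area observation only yields $\abs{x_ny_n}\leq A(\eta)$, i.e.\ boundedness, and the appeal to ``Rauzy--Veech fails to stay balanced'' or to the Boshernitzan/Veech criterion replaces one deep theorem by another without proof; you concede as much when you say this step ``is exactly the content of Masur's theorem''. Second, and independently, the logic of the contrapositive is incomplete: producing times $s_n\to+\infty$ with $G_{s_n}(N,\eta)\notin B$ does not contradict the hypothesis, which only asserts $G_{t_n}(N,\eta)\in B$ along some \emph{other} sequence $t_n\to+\infty$. To reach a contradiction you must show the forward orbit leaves every compact set \emph{permanently}; concretely, for fixed $\epsilon>0$ the saddle connection $\gamma_n$ has $G_t$-length at most $\epsilon$ only for $t$ in the interval $\bigl[\log(\abs{y_n}/\epsilon),\log(\epsilon/\abs{x_n})\bigr]$, and one needs these intervals to overlap so that their union covers a ray $[T,+\infty)$. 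That interlacing condition, roughly $\abs{x_n}\,\abs{y_{n+1}}\leq\epsilon^2$, is an additional property of the construction that you neither state nor verify, and your final sentence ``in fact the whole forward orbit eventually exits every fixed compact set'' is asserted rather than derived. (The usual direct argument avoids this issue: assuming recurrence along $t_n$, one extracts a convergent subsequence $G_{t_n}\bigl(g\cdot(M,\omega)\bigr)\to q'$ in the stratum and uses the uniform flat geometry near $q'$ to show that any two ergodic transverse invariant measures of the vertical foliation must coincide.)
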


It is worth to stress that the core of our proof is in showing that we can construct the sequence of strips as in the statement of Proposition~\ref{thm:criterion}, that is the content of the following three sections.

\section{Construction of Ergodic Directions}\label{sec:ergodicdirections}
We define the punctured torus
\[
	\TT_0^2=\left( \RR^2\setminus\Set{\left( \frac{n}{2}, \frac{m}{2} \right), n,m\in\ZZ} \right)/ \ZZ^2.
\]
A fundamental domain for it is the subset
\begin{equation}\label{eq:fundamentaldomain}
	T_0^2 = \left[ -\tfrac{1}{2}, \tfrac{1}{2} \right) \times \left[ -\tfrac{1}{2}, \tfrac{1}{2} \right) \setminus \left\{ (0,0), \left(-\tfrac{1}{2},-\tfrac{1}{2}\right), \left(-\tfrac{1}{2},0\right), \left(0,-\tfrac{1}{2}\right)\right\}\subset\RR^2.
\end{equation}
Finally, let $\pi\colon\TT_0^2\to T_0^2$ be the bijection, induced by the quotient map, between the two.
In the following, unless explicitly stated, we will identify $\TT_0^2$ with its fundamental domain $T_0^2$ via $\pi$.

For $z\in\TT_0^2$, let $M(z)\in\cH(1,1)$ the surface represented in Figure~\ref{fig:mz}, made out of two square tori glued along a slit.
We will distinguish the two singularities of $M(z)$ one from the other.
In particular $z=(x,y)$ will always be the position of the singularity denoted by $\bullet$ and $-z$ the one of the singularity denoted by $\circ$.
Let $\omega$ be the $1$-form induced by $dz$ on $M(z)$, and call $\cL\subset\cH(1,1)$ the locus made of all the surfaces $(M(z),\omega)$ for $z\in \TT_0^2$.
Remark that the precise choice of the slit joining the two singularities in the fundamental domain does not affect the flat geometry of $M(z)$, as two different choices are translation equivalent to each other.
However, this is relevant for the analysis of the homology of the surface we are going to carry.
To this end, we will always choose the slit obtained by projecting on the tori the straight segment in the plane joining the points $\pi(z)$ and $-\pi(z)$, as it is shown in Figure~\ref{fig:mz}.

The linear action of $\GL(2,\RR)$ on the plane $\RR^2\cong\CC$ induces an action of the subgroup $\GL(2,\ZZ)$ on the torus $\TT_0^2$.
One can show, see~\parencite[][p.~648 and p.~652--654]{FraczekUlcigrai:tube}, that the locus $\cL$ is preserved by this action.
In fact, the surface $g\cdot M(z)$ is translation equivalent to the surface $M(gz)$, for $g\in\GL(2,\ZZ)$.

\begin{figure}[t]
\centering
\def\svgwidth{0.6\textwidth}
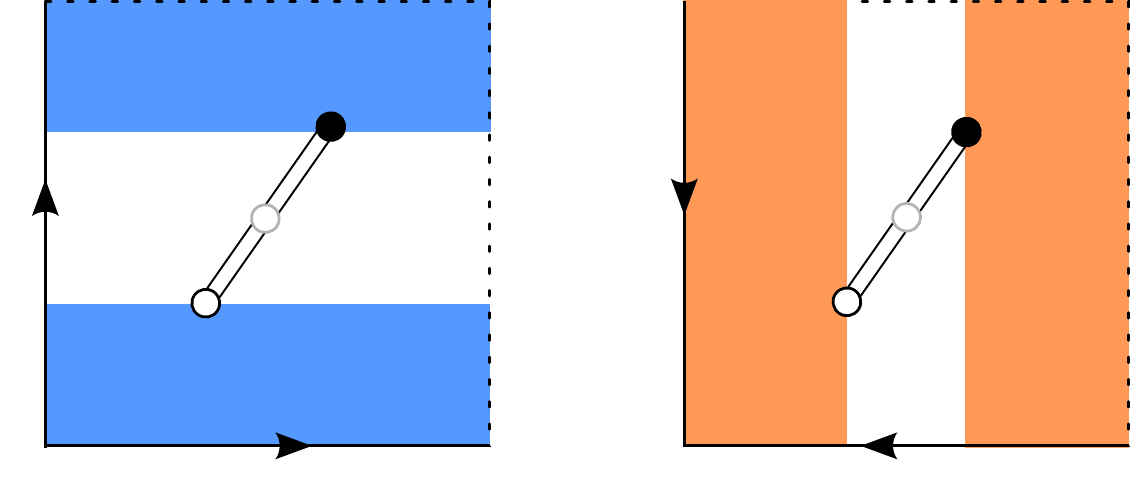
\caption{The surface $M(z)$ with the cylinder $C_z^h$ in light blue and the cylinder $C_z^v$ in orange.}
\label{fig:mz}
\end{figure}

The group of translation equivalences of $M(z)$ consists of two elements, the identity $\id$ and the involution $\tau$ that exchanges the two squares by translations.
One has
\[
	H_1^{(0)}(M(z);\ZZ) =\Set{\gamma\in H_1(M(z);\ZZ) : \tau_*\gamma=-\gamma},
\]
where $\tau_*$ denotes the induced action on the homology.
For every surface $M(z)\in\cL$ let $\{\alpha,\beta\}$ be the basis of $H_1^{(0)}(M(z);\ZZ)$ as in Figure~\ref{fig:mz}.
To any such surface we associate a $\ZZ^2$-cover $(\widetilde{M}(z)_\Gamma,\widetilde{\omega})$, where $\Gamma=(\beta,-\alpha)$ and $\widetilde{\omega}$ is the pullback under the covering map of $\omega$.
The negative sign in $\alpha$ is due to the orientation chosen for $\alpha$, see Figure~\ref{fig:mz}.
We remark that the orientation covering $\widetilde{M}(\ZZ^2,R)$ of $F(\ZZ^2,R)$ coincides with $\widetilde{M}(R)_\Gamma$.

Representing every surface in $\cL$ as $M(z)$, for some $z\in \TT_0^2$, allows us to consistently choose a standard basis of $H_1^{(0)} (M(z);\ZZ)$, given by the curves $\alpha$ and $\beta$ as in Figure~\ref{fig:mz}.
Remark that this is equivalent to choose a \emph{marking} of the translation surfaces considered.
Under this choice of basis we can represent the induced action of a matrix $g\in\GL(2,\ZZ)$ on the zero holonomy homology
\[
	g_*(z)\colon H_1^{(0)}(M(z);\ZZ)\to H_1^{(0)}(g\cdot M(z);\ZZ)
\]
with a $2\times 2$ integer matrix.
More precisely, as the group of translation equivalences of $M(z)$ consists of two elements, there are exactly two maps $\zeta^g,\tau\circ\zeta^g\colon M(z)\to g\cdot M(z)$ that have a fixed matrix $g$ as derivative.
Their induced action from $H_1^{(0)}(M(z);\ZZ)$ to $H_1^{(0)}(g\cdot M(z);\ZZ)$ is related by $(\tau\circ\zeta^g)_*=-\zeta_*^g$.
In other words, given a matrix $g\in\GL(2,\ZZ)$, its induced action on the zero holonomy homology $g_*(z)$ is well defined only as an element of $\PGL(2,\ZZ)$.
One has
\begin{equation}\label{eq:chainrule}
	(g_1\cdot g_2)_*(z)=(g_1)_*(g_2z)\cdot(g_2)_*(z).
\end{equation}

\begin{convention}
Since we are interested only in the action induced on the zero holonomy homology, for the sake of brevity we will write $g_*(z)$ to denote the action from $H_1^{(0)}(M(z);\ZZ)\subset H_1(M(z);\ZZ)$ to $H_1^{(0)}(g\cdot M(z);\ZZ)$.
Moreover, the notation $g_*(z)$ is meant to stress the fact that the induced action of the matrix $g$ from $H_1^{(0)}(M(z);\ZZ)$ to $H_1^{(0)} (g\cdot M(z);\ZZ)$ depends on $z$, the endpoint of the slit, see Lemma~\ref{thm:keylemma} for more details.
\end{convention}

\subsection{Construction of the strips}
Fix $z\in \TT_0^2$ and let $\pi(z)=(x,y)\in T_0^2$.
We will consider the following two cylinders in $M(z)$.
\begin{gather}
	C_z^h=\left[-\tfrac{1}{2},\tfrac{1}{2}\right) \times \left( \left[\abs{y},\tfrac{1}{2}\right) \cup \left[-\tfrac{1}{2},-\abs{y}\right]\right),\label{eq:horizontalcylinders}\\
	C_z^v=\left( \left[\abs{x},\tfrac{1}{2}\right) \cup \left[-\tfrac{1}{2},-\abs{x}\right]\right) \times \left[-\tfrac{1}{2},\tfrac{1}{2}\right)\label{eq:verticalcylinders},
\end{gather}
which are represented in Figure~\ref{fig:mz}.
As the notation suggests, we will use the former to obtain a family of strips well approximating a direction $\theta$ and with $k_\theta^h\equiv(\pm1,0)$; the latter will lead to a family of strips with $k_\theta^v=(0,\pm1)$.
We will sometimes call also the image under a matrix $g\in\GL(2,\ZZ)$ of the first cylinder ``horizontal'' and of the second one ``vertical''.

The displacement vectors of the core curves of these two cylinders are $v(C_z^h)=(1,0)$ and $v(C_z^v)=(0,1)$.
We have $k(C_z^h)=(\langle\delta(C_z^h),\beta\rangle, \langle\delta(C_z^h),\alpha\rangle)=(1,0)$ and $k(C_z^v)=(0,1)$.
Finally, their areas are given by $A(C_z^h)=1-2\abs{y}$ and $A(C_z^v)=1-2\abs{x}$.

Let $g\in\SL(2,\ZZ)$.
We will denote $z_g=(x_g,y_g)=g^{-1}(z)$ and $\zeta^g\colon M(z_g)\to M(z)$ one of the affine transformations that has $g$ as derivative.
Write $C_g^h=\zeta^g(C_{z_g}^h)\subset M(z)$.
Its core curve is $\delta(C_g^h)=\zeta_*^g(\delta(C_{z_g}^h))$.
We have
\begin{gather*}
	v(C_g^h)=v(\zeta_*^g(\delta(C_{z_g}^h)))=\hol(\zeta_*^g(\delta(C_{z_g}^h)))=D\zeta^g\hol(\delta(C_{z_g}^h))=g(1,0),\\
	A(C_g^h)=A(\zeta^g(C_{z_g}^h))=A(C_{z_g}^h)=1-2\abs{y_g}.
\end{gather*}
Moreover
\[
\begin{split}
	k(C_g^h)&=(\langle\zeta_*^g(\delta(C_{z_g}^h)),\beta\rangle,\langle\zeta_*^g(\delta(C_{z_g}^h)),\alpha\rangle)\\
		&= (\langle\delta(C_{z_g}^h),(\zeta_*^g)^{-1}\beta\rangle,\langle\delta(C_{z_g}^h),(\zeta_*^g)^{-1}\alpha\rangle).
\end{split}
\]
If we assume that $g_*(z_g)\colon H_1^{(0)}(M(z_g);\ZZ) \to H_1^{(0)}(M(z);\ZZ)=\id$ we have
\[
	k(C_g^h)=\pm (\langle\delta(C_{z_g}^h),\beta\rangle,\langle\delta(C_{z_g}^h),\alpha\rangle)=(\pm1,0).
\]
Similar computations hold also for the ``vertical'' cylinders $C_g^v$.

In order to state our result we need to introduce some notation.
Given a real number $0<x<1$ its continued fraction expansion is denoted
\[
	x=[0;a_1,a_2,\dots]=\cfrac{1}{a_1+\cfrac{1}{a_2+\cdots}},
\]
moreover call
\[
	h_+=\begin{pmatrix}
			1	&	1\\
			0	&	1
		\end{pmatrix}
	\qquad \text{and} \qquad
	h_-=\begin{pmatrix}
			1	&	0\\
			1	&	1
		\end{pmatrix}.
\]

\begin{thm}\label{thm:conditionaltheorem}
Suppose that $z=(x,y)\in \TT_0^2$, let $\vartheta=[0;a_1,a_2,\dots]$ and fix $\epsilon>0$.
Assume that there is a sequence of even times $k_n$ such that
\[
	\Bigl( h_+^{a_1}\cdots h_-^{a_{k_n}}\bigr)\cdot M(z) = M(z) \qquad \text{ and } \qquad	\Bigl( h_+^{a_1}\cdots h_-^{a_{k_n}} \Bigr)_*(z)=\id.
\]
Finally, suppose
\[
	a_{k_n}\geq\frac{4(1+\epsilon)}{1-2\abs{x}}
	\qquad \text{and} \qquad
	a_{k_n+1}\geq\frac{2(1+\epsilon)}{1-2\abs{y}}.
\]
Then the directional flow $\tflow$ in direction $\theta$ of the vector $(1,\vartheta)$ on the $\ZZ^2$-cover $\widetilde{M}(z)_\Gamma$, where $\Gamma=(\beta,-\alpha)$, is \emph{ergodic}.
\end{thm}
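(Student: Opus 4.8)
The plan is to verify the hypotheses of the Ergodicity Criterion (Proposition~\ref{thm:criterion}) for the direction $\theta$ of the vector $(1,\vartheta)$: that $\theta$ is an ergodic direction for $\flow$ on the compact surface $M(z)$, and that it is well approximated by strips (Definition~\ref{def:wellapproximated}) by two sequences realising the essential values $(\pm1,0)$ and $(0,\pm1)$. For the first point I would invoke Masur's criterion (Theorem~\ref{thm:Masurcriterion}) together with the standard dictionary between the continued fraction algorithm of $\vartheta=[0;a_1,a_2,\dots]$ and the Teichm\"uller geodesic flow on the locus $\cL$: if $r_\theta$ rotates the direction $\theta$ onto the vertical, then along a suitable sequence of times the orbit $G_t\bigl(r_\theta\cdot M(z)\bigr)$ visits, up to a uniformly bounded matrix, surfaces translation equivalent to $\bigl(h_+^{a_1}h_-^{a_2}\cdots h_-^{a_{k_n}}\bigr)\cdot M(z)$ --- this is where the evenness of the $k_n$ is used, so that the parabolic word ends in $h_-$. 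The hypothesis $\bigl(h_+^{a_1}\cdots h_-^{a_{k_n}}\bigr)\cdot M(z)=M(z)$ then says that this subsequence of the orbit returns to the single point $M(z)$, hence stays in a bounded subset of $\cH(1,1)$, and Masur's criterion gives that $\flow$ on $M(z)$ is uniquely ergodic, in particular ergodic. Note also that, writing $g_n:=h_+^{a_1}\cdots h_-^{a_{k_n}}$, the hypothesis forces $g_nz=z$ rather than $g_nz=-z$: in the latter case the marked action on homology would be $-\id$, contradicting $(g_n)_*(z)=\id$.

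Since $g_nz=z$ we have $z_{g_n}=g_n^{-1}z=z$, so the affine maps $\zeta^{g_n}\colon M(z)\to M(z)$ with derivative $g_n$ carry the cylinders $C_z^h$ and $C_z^v$ of \eqref{eq:horizontalcylinders}--\eqref{eq:verticalcylinders} to cylinders $C_{g_n}^h,C_{g_n}^v\subset M(z)$. By the computations carried out just before the statement one has $v(C_{g_n}^h)=g_n(1,0)$, $A(C_{g_n}^h)=1-2\abs{y}$, $v(C_{g_n}^v)=g_n(0,1)$ and $A(C_{g_n}^v)=1-2\abs{x}$, while the assumption $(g_n)_*(z)=\id$, together with the cocycle relation~\eqref{eq:chainrule}, gives $k(C_{g_n}^h)\equiv(\pm1,0)$ and $k(C_{g_n}^v)\equiv(0,\pm1)$. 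Passing to a subsequence I may assume the signs are constant and set $k_\theta^h\equiv(\pm1,0)$, $k_\theta^v\equiv(0,\pm1)$; the corresponding lifts $\widetilde{C_{g_n}^h},\widetilde{C_{g_n}^v}\subset\widetilde M(z)_\Gamma$ are genuine infinite strips because $k(C_{g_n}^h),k(C_{g_n}^v)\neq(0,0)$, and they are pairwise distinct because $\norm{v(C_{g_n}^h)}\to\infty$.

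Next I would identify the columns $g_n(1,0)$ and $g_n(0,1)$ with the convergent vectors $(q_{k_n},p_{k_n})$ and $(q_{k_n-1},p_{k_n-1})$ of $\vartheta$. Since both convergent slopes tend to $\vartheta$, these vectors become nearly parallel to $\theta$, with norm comparable respectively to $q_{k_n}$ and $q_{k_n-1}$, whereas the areas stay bounded below by $1-2\abs{y}>0$ and $1-2\abs{x}>0$ --- positivity being implicit in the finiteness of the bounds imposed on $a_{k_n}$ and $a_{k_n+1}$. Substituting into the inequality of Definition~\ref{def:wellapproximated} and using the elementary estimates $\abs{q_k\vartheta-p_k}\le 1/q_{k+1}$ and $q_{k+1}\ge a_{k+1}q_k$, the required inequality for $C_{g_n}^h$ reduces to $a_{k_n+1}$ being sufficiently large relative to $(1-2\abs{y})^{-1}$, and the one for $C_{g_n}^v$ to $a_{k_n}$ being sufficiently large relative to $(1-2\abs{x})^{-1}$; the explicit bounds $a_{k_n}\ge 4(1+\epsilon)/(1-2\abs{x})$ and $a_{k_n+1}\ge 2(1+\epsilon)/(1-2\abs{y})$ in the statement ensure that both hold, the slack being controlled by $\epsilon$. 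Hence $\theta$ is well approximated by $\{\widetilde{C_{g_n}^h}\}_n$ with $k_\theta^h\equiv(\pm1,0)$ and by $\{\widetilde{C_{g_n}^v}\}_n$ with $k_\theta^v\equiv(0,\pm1)$, and Proposition~\ref{thm:criterion} yields ergodicity of $\tflow$ on $\widetilde M(z)_\Gamma$.

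I expect the main obstacle to be the first step: making the Teichm\"uller-flow/continued-fraction correspondence on $\cL$ fully precise, in particular verifying that the return word $h_+^{a_1}\cdots h_-^{a_{k_n}}$ really does bring the renormalised orbit of $r_\theta\cdot M(z)$ back to a bounded region (equivalently, back to $M(z)$ itself), and dovetailing this with the subtlety that the induced homology action $g\mapsto g_*(z)$ is only well defined in $\PGL(2,\ZZ)$ --- which is precisely why the strong hypothesis $(g_n)_*(z)=\id$ is imposed, and why one must pass to subsequences in order to fix the signs of $k_\theta^h$ and $k_\theta^v$. The constant-chasing in the last step is routine, but has to be carried out carefully to land on the bounds as stated.
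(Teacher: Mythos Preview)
Your proposal is correct and follows essentially the same three-step strategy as the paper's own proof: Masur's criterion for ergodicity on $M(z)$, then well-approximation by the images of $C_z^h$ and $C_z^v$ under the affine maps $\zeta_n$ with derivative $g_n=h_+^{a_1}\cdots h_-^{a_{k_n}}$, feeding into Proposition~\ref{thm:criterion}. The only notable cosmetic difference is that in Step~1 the paper does not use the rotation $r_\theta$ but rather the upper-triangular matrix $\sigma=\bigl(\begin{smallmatrix}\vartheta&-1\\0&1/\vartheta\end{smallmatrix}\bigr)$ sending $(1,\vartheta)$ to the vertical; this choice makes the entries of $\sigma_n=G_{\log q_{k_n}}\cdot\sigma\cdot g_n$ explicitly lie in $[-1,1]$ via standard continued-fraction inequalities, which is exactly the ``uniformly bounded matrix'' you allude to but leave implicit.
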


\begin{proof}
We divide the proof in three steps.

\textit{Step 1: the flow $\flow$ on $M(z)$ is \emph{ergodic}.}
Denote by $\tfrac{p_n}{q_n}$ the $n$-th convergent of the continued fraction expansion of $\vartheta$.
Then
\[
	h_+^{a_1}\cdots h_-^{a_{k_n}}=
		\begin{pmatrix}
			q_{k_n}	&	q_{k_n-1}\\
			p_{k_n}	&	p_{k_n-1}
		\end{pmatrix}.
\]
The matrix
\[
	\sigma=
		\begin{pmatrix}
			\vartheta	&	-1\\
			0	&	\frac{1}{\vartheta}
		\end{pmatrix}
\]
sends the vector $(1,\vartheta)$ to the vertical direction.
Let us show that the sequence
\[
	(G_{\log{k_n}}\cdot\sigma\cdot M(z))_{n\in\NN} \subset \cH(1,1)
\]
is bounded in the stratum.
We have
\[
	G_{\log{k_n}}\cdot\sigma\cdot M(z) = \sigma_n\cdot M(z),
\]
where
\[
\begin{split}
	\sigma_n 	&=	\operatorname{diag}\Bigl(q_{k_n},\tfrac{1}{q_{k_n}}\Bigr)\cdot\sigma\cdot h_+^{a_1}\cdots h_-^{a_{k_n}}\\
			&=	\begin{pmatrix}
					q_{k_n}	&	0\\
					0		&	\frac{1}{q_{k_n}}
				\end{pmatrix}
				\begin{pmatrix}
					\vartheta		&	-1\\
					0		&	\frac{1}{\vartheta}
				\end{pmatrix}
				\begin{pmatrix}
					q_{k_n}	&	q_{k_n-1}\\
					p_{k_n}	&	p_{k_n-1}
				\end{pmatrix}\\
			&=	\begin{pmatrix}
					q_{k_n}(q_{k_n}\vartheta-p_{k_n})	&	q_{k_n}(q_{k_n-1}\vartheta-p_{k_n-1}) \\
					\frac{p_{k_n}}{\vartheta q_{k_n}}	&	\frac{p_{k_n-1}}{\vartheta q_{k_n}}
				\end{pmatrix}.
\end{split}
\]
All the entries of $\sigma_n$ are contained in $[-1,1]$.
In fact
\[
	\abs{q_{k_n}(q_{k_n}\vartheta-p_{k_n})}<\abs{q_{k_n}(q_{k_n-1}\vartheta-p_{k_n-1})}<1
\]
and, since we assumed that $k_n$ are even numbers,
\[
	0<\frac{p_{k_n-1}}{q_{k_n}}<\frac{p_{k_n}}{q_{k_n}}<\vartheta.
\]
Call $G_0\subset\SL(2,\RR)$ the compact subset of matrices with all coefficients in $[-1,1]$.
Then the orbit $G_0\cdot M(z)$ is a compact subset of $\cH(1,1)$.
Since we have shown that $\sigma_n\cdot M(z)\in G_0 \cdot M(z)$ for all $n$, we can apply Masur's criterion (Theorem~\ref{thm:Masurcriterion}) and deduce that the flow in the direction $\theta$ of the vector $(1,\vartheta)$ is uniquely ergodic on the compact surface $M(z)$.

\textit{Step 2: $\theta$ is well approximated by ``horizontal'' cylinders.}
The hypothesis guarantee, for every $n$, the existence of transformations
\[
	\zeta_n\colon M(z)\to M(z)
\]
whose derivative is $D\zeta_n=h_+^{a_1}\dots h_-^{a_{k_n}}$ and that act trivially on the holonomy zero homology.
Consider the cylinders $C_n^h=\zeta_n(C_z^h)\subset M(z)$, where $C_z^h$ was defined in~\eqref{eq:horizontalcylinders}.
By the assumptions, and by the computations before the statement of the Theorem, we have
\begin{gather*}
	k(C_n^h)=(1,0),\\
	A(C_n^h)=A(C_z^h)=1-2\abs{y},\\
	v(C_n^h)=(D\zeta_n) v(C_z^h)= (D\zeta_n)(1,0)=(q_{k_n},p_{k_n}).
\end{gather*}
Since $k(C_n^h)\neq(0,0)$ the cylinders lift to strips in the infinite surface $\widetilde{M}(z)_\Gamma$.
We now show that these strips well approximate the direction $\theta$.
On one hand we have
\[
	\frac{\abs{(1,\vartheta)\wedge v(C_n^h)}}{\norm{(1,\vartheta)}}	=\frac{\abs{(1,\vartheta)\wedge(q_{k_n},p_{k_n})}}{\norm{(1,\vartheta)}}
										=\frac{\abs{q_{k_n}\vartheta-p_{k_n}}}{\norm{(1,\vartheta)}}
										<\frac{1}{\norm{(1,\vartheta)}}\frac{1}{a_{k_n+1}q_{k_n}},
\]
on the other
\[
	\frac{A(C_n^h)}{2\norm{(C_n^h)}} \geq \frac{1-2\abs{y}}{2q_{k_n}\norm{(1,\vartheta)}}.
\]
So 
\[
	\frac{\abs{(1,\vartheta)\wedge v(C_n^h)}}{\norm{(1,\vartheta)}} \leq\frac{1}{1+\epsilon}\frac{A(C_n^h)}{2\norm{(C_n^h)}}
\]
if $a_{k_n+1}\geq\frac{2(1+\epsilon)}{1-2\abs{y}}$.

\textit{Step 3: $\theta$ is well approximated by ``vertical'' cylinders.}
We now turn our attention to the cylinders $C_n^v=\zeta_n(C_z^v)$, where $C_z^v$ was defined in~\eqref{eq:verticalcylinders} and $\zeta_n$ is defined in the previous step.
Let us recall that
\begin{gather*}
	k(C_n^v)=(0,1),\\
	A(C_n^v)=A(C_z^v)=1-2\abs{x},\\
	v(C_n^v)=(D\zeta_n)v(C_z^v)=(D\zeta_n)(0,1)=(q_{k_n-1},p_{k_n-1}).
\end{gather*}
As before, these cylinders all lift to infinite strips in $\widetilde{M}(z)_\Gamma$.
Remark that, since $\alpha<\frac{p_{k_n-1}}{q_{k_n-1}}$ the analysis we carried on in the previous step cannot be applied in this case and we have to proceed in a slightly different way.
We have
\[
	\frac{\abs{(1,\vartheta)\wedge v(C_n^v)}}{\norm{(1,\vartheta)}}	=\frac{\abs{(1,\vartheta)\wedge(q_{k_n-1},p_{k_n-1})}}{\norm{(1,\vartheta)}}
										<\frac{1}{\norm{(1,\vartheta)}}\frac{1}{a_{k_n}q_{k_n-1}},
\]
and
\[
	\frac{A(C_n^v)}{2\norm{v(C_n^v)}} = \frac{1-2\abs{x}}{2\norm{(q_{k_n-1},p_{k_n-1})}}.
\]
Imposing
\[
	\frac{1}{\norm{(1,\vartheta)}}\frac{1}{a_{k_n}q_{k_n-1}}\leq\frac{1}{1+\epsilon}\frac{A(C_n^v)}{2\norm{v(C_n^v)}},
\]
we obtain the inequality
\[
	a_{k_n}\geq\frac{\norm{(q_{k_n-1},p_{k_n-1})}}{q_{k_n-1}\norm{(1,\vartheta)}}\frac{2(1+\epsilon)}{1-2\abs{x}}.
\]
Let us focus on the first ratio in the RHS.
Using that $k_n$ is even and that $a_{k_n}q_{k_n-1}< q_{k_n}=a_{k_n}q_{k_n-1}+q_{k_n-2}<(a_{k_n}+1)q_{k_n-1}$ we have
\[
	\frac{\norm{(q_{k_n-1},p_{k_n-1})}}{q_{k_n-1}\norm{(1,\vartheta)}}	<\frac{q_{k_n}\norm{(q_{k_n-1},p_{k_n-1})}}{q_{k_n-1}\norm{(q_{k_n},p_{k_n})}}
								<\frac{a_{k_n}+1}{a_{k_n}}\frac{\norm{(q_{k_n-1},p_{k_n-1})}}{\norm{(q_{k_n-1},p_{k_n-1})}}
											\leq 2,
\]
which finally gives
\[
	a_{k_n}\geq\frac{4(1+\epsilon)}{1-2\abs{x}}\geq\frac{\norm{(q_{k_n-1},p_{k_n-1})}}{q_{k_n-1}\norm{(1,\vartheta)}}\frac{2(1+\epsilon)}{1-2\abs{x}}.
\]

The three steps we just completed show that all the hypothesis of the ergodicity criterion~\ref{thm:criterion} holds and hence we have shown that the flow $\tflow$ on $\widetilde{M}(z)_\Gamma$ is ergodic.
\end{proof}

\subsection{The action of $\SL(2,\RR)$ on the homology}
Thanks to Theorem~\ref{thm:conditionaltheorem} we have reduced ourselves to construct transformations in $\SL(2,\RR)$ that act trivially on the zero holonomy homology.
In order to construct such elements, we need to analyse in more detail the induced action of $\SL(2,\RR)$.
Call $\omega=\left(\begin{smallmatrix}	0&1 \\	-1&0 \end{smallmatrix}\right)$.
We have the following identities, which can be verified simply by matrix multiplications and that will be crucial in our constructions
\begin{equation}\label{eq:identities}
	\omega\cdot h_\pm\cdot\omega^{-1}=h_\mp^{-1},\qquad	h_-\cdot h_+^{-1}\cdot h_-=\omega^{-1},\qquad	h_+\cdot h_-^{-1}\cdot h_+=\omega.
\end{equation}

\begin{lemma}
Let $F=\Set{(x,y)\in T_0^2 : x,y\neq-\tfrac{1}{2}}$.
Then for every $z\in F$
\[
	(-\id)_*(z)=\id.
\]
Let $g$ be an element of $\SL(2,\ZZ)$ and let $g\cdot M(z)=M(z')$, with $z,z'\in F$.
Then
\begin{equation}\label{eq:gidentities}
	g\cdot M(-z)=M(-z') \qquad\text{and}\qquad g_*(-z)=g_*(z).
\end{equation}
\end{lemma}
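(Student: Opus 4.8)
The statement has two parts: first, that the involution $-\id$ (i.e.\ $r_\pi$, the rotation by $\pi$) acts trivially on $H_1^{(0)}(M(z);\ZZ)$ for $z$ in the interior-type locus $F$; second, a compatibility between the action of a matrix $g\in\SL(2,\ZZ)$ on the surfaces $M(z)$ and $M(-z)$. The key geometric observation is that $M(z)$ and $M(-z)$ are genuinely the \emph{same} surface: the two tori are the standard square torus, which is $r_\pi$-symmetric, and the slit joining $\pi(z)$ to $-\pi(z)$ is exactly the $r_\pi$-image of itself. So $r_\pi$ is an affine automorphism of $M(z)$ with derivative $-\id$, and moreover it \emph{swaps} the two singularities $\bullet$ (at $z$) and $\circ$ (at $-z$). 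First I would make this precise: describe $r_\pi$ as an affine diffeomorphism $M(z)\to M(z)$, check that it genuinely maps the chosen slit to itself (here is where the hypothesis $x,y\neq-\tfrac12$ matters — on the boundary of the fundamental domain the combinatorics of which singularity sits where can be ambiguous, and one wants $z$ and $-z$ to be honestly distinct interior points so that the slit and its endpoints are unambiguous), and observe that $r_\pi$ commutes with the translation involution $\tau$.

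**Part one.** To show $(-\id)_*(z)=\id$ on $H_1^{(0)}$, I would compute the action of $r_\pi$ on the standard basis $\{\alpha,\beta\}$ of Figure~\ref{fig:mz}. The curves $\alpha$ and $\beta$ are chosen in $H_1^{(0)}$, i.e.\ they are anti-invariant under $\tau$; under $r_\pi$ each of them, being (a combination of) the standard loops of the square torus together with a piece near the slit, gets sent to itself up to orientation and up to the ambiguity by $\tau$ discussed in the paper (recall $g_*(z)$ is only well defined in $\PGL(2,\ZZ)$). Concretely one checks $(r_\pi)_*\alpha=\pm\alpha$ and $(r_\pi)_*\beta=\pm\beta$ with a consistent sign, so that the induced matrix is $\pm\id$, which in $\PGL(2,\ZZ)$ is exactly $\id$. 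I would draw the picture: $r_\pi$ reverses the orientation of a core curve of $C^h_z$ and of $C^v_z$, while simultaneously reversing the ``vertical'' displacement, so on the \emph{homology classes} $\alpha,\beta$ (which are symmetric about the center of the fundamental domain once one accounts for the slit) it acts as the identity. This is the step I'd expect to require the most care with signs and with the $\tau$-ambiguity, but it is essentially a finite check on a picture.

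**Part two.** For the identities in~\eqref{eq:gidentities}: since $M(z)$ and $M(-z)$ are the same surface but with the roles of $\bullet$ and $\circ$ interchanged, and since $\tau$ also interchanges them by translation, I would argue that ``relabel by $-z$'' is the same operation as ``apply $\tau$'' at the level of marked surfaces, or equivalently use $r_\pi$ which realizes $M(-z)\cong M(z)$ linearly by $-\id$. Then $g\cdot M(-z) = g\cdot((-\id)\cdot M(z))$, and since $-\id$ is central in $\GL(2,\ZZ)$ this equals $(-\id)\cdot(g\cdot M(z)) = (-\id)\cdot M(z') = M(-z')$, giving the first identity. For the second, apply the chain rule~\eqref{eq:chainrule} together with Part one: $g_*(-z) = g_*((-\id)z) = \big(g\cdot(-\id)\big)_*(z)\cdot\big((-\id)_*(z)\big)^{-1}$, and using $g\cdot(-\id)=(-\id)\cdot g$ and $(-\id)_*(\,\cdot\,)=\id$ on the relevant loci (applied at both $z$ and $z'$, which both lie in $F$ by the first identity) this collapses to $g_*(-z)=g_*(z)$. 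The only thing to be careful about is that $(-\id)_*$ is being invoked at $z$ \emph{and} at $z'=gz$; the hypothesis $z,z'\in F$ in the lemma is exactly what licenses both applications, so the argument is clean once Part one is in hand.

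**Main obstacle.** The genuinely nontrivial content is Part one — verifying on the explicit picture of Figure~\ref{fig:mz} that $r_\pi$ fixes the homology classes $\alpha$ and $\beta$ (in $\PGL(2,\ZZ)$), keeping track of orientations, the slit, and the $\tau$-ambiguity. Everything after that is formal manipulation with~\eqref{eq:chainrule} and the centrality of $-\id$.
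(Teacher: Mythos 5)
Your proposal is correct and follows essentially the same route as the paper: the first claim is obtained by observing that the rotation by $\pi$ sends $\alpha\mapsto-\alpha$ and $\beta\mapsto-\beta$ (for $z$ off the boundary of $T_0^2$), which is the identity once one accounts for the $\tau$-ambiguity, i.e.\ in $\PGL(2,\ZZ)$. The second claim is then, exactly as you say, the centrality of $-\id$ in $\GL(2,\ZZ)$ combined with the chain rule~\eqref{eq:chainrule} and the first claim applied at both $z$ and $z'$.
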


\begin{proof}
The involution $-\id$ acts on $\cL$ by rotating each square of $M(z)$ by the angle $\pi$.
In particular, the induced action on $H_1^{(0)}(M(z);\ZZ)$ sends $\alpha$ to $-\alpha$ and $\beta$ to $-\beta$, unless $z$ belongs to the boundary of $T_0^2$.
The first claim follows exchanging the two squares that form $M(z)$ using the affine automorphism $\tau$.

The second one follows from the fact that $g\cdot M(z)=M(gz)$ and that $g$ and $-\id$ commute, so we have $g\cdot M(-\id z)=M(-\id z')$.
Finally, using~\eqref{eq:chainrule} we have
\[
\begin{split}
	g_*(z)&=(-\id)_*(z')\cdot g_*(z)= (-\id\cdot g)_*(z)\\
		&=(g\cdot-\id)_*(z)=g_*(-z)\cdot(-\id)_*(z)=g_*(-z).\qedhere
\end{split}
\]
\end{proof}

Since $h_+$ and $h_-$ generate $\SL(2,\ZZ)$, if we look at their action on the zero holonomy homology we obtain complete informations on the action of $\SL(2,\ZZ)$ itself.
The next result is proved in~\cite[][Lemma~3.3]{FraczekUlcigrai:tube}, to which we refer for the proof.

\begin{lemma}\label{thm:keylemma}
Set $S=\Set{(x,y)\in T_0^2 : -\tfrac{1}{2}\leq x+y<\tfrac{1}{2}}$, see Figure~\ref{fig:S}.
For every $z\in T_0^2$ we have
\[
	(h_\pm)_*(z)=
		\begin{cases}
			h_\pm,		&	\text{if $z\in S$,}\\
			h_\pm^{-1},		&	\text{if $z\notin S$.}
		\end{cases}
\]
\end{lemma}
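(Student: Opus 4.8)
The plan is to follow the strategy of \cite[][Lemma~3.3]{FraczekUlcigrai:tube}: first reduce to the case of $h_+$ by a symmetry, then chase the explicit representatives of the standard basis $\{\alpha,\beta\}$ through the linear shear $h_+$ and the subsequent reduction of $h_+z$ back into the fundamental domain $T_0^2$. For the reduction to $h_+$, let $J=\left(\begin{smallmatrix}0&1\\1&0\end{smallmatrix}\right)\in\GL(2,\ZZ)$: it preserves the locus $\cL$, it preserves $S$ because the defining quantity $x+y$ is symmetric in $x$ and $y$, it is a bijection of $T_0^2$, and $Jh_+=h_-J$ as matrices. As $J$ only permutes coordinates, its action on $\TT_0^2$ never calls for a reduction, and a direct check gives, for $z$ away from the edges $x=-\tfrac12$ and $y=-\tfrac12$, that $J_*(z)$ is the swap $\left(\begin{smallmatrix}0&1\\1&0\end{smallmatrix}\right)$ in $\PGL(2,\ZZ)$: reflecting each square across its diagonal interchanges the horizontal generator $\alpha$ with the vertical one $\beta$ up to sign, and, $J$ being orientation reversing and hence negating the intersection form, the representing matrix must have determinant $-1$. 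Feeding $Jh_+=h_-J$ into the cocycle identity~\eqref{eq:chainrule} yields $(h_-)_*(Jz)=J_*(h_+z)\,(h_+)_*(z)\,J_*(z)^{-1}$, and conjugation by the swap carries $h_+$ to $h_-$ and $h_+^{-1}$ to $h_-^{-1}$; since $Jz\in S$ precisely when $z\in S$ and $z\mapsto Jz$ is onto $T_0^2$, the $h_-$ statement follows from the $h_+$ statement.

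It thus remains to prove $(h_+)_*(z)=h_+$ when $z\in S$ and $(h_+)_*(z)=h_+^{-1}$ otherwise. Write $\pi(z)=(x,y)$; the linear map $h_+$ sends the straight slit of $M(z)$ (the segment from $(x,y)$ to $(-x,-y)$) to the straight segment from $(x+y,y)$ to $(-x-y,-y)$. If $z\in S$ then $x+y\in[-\tfrac12,\tfrac12)$, so this image is already the standard slit of $M(h_+z)$ and no reduction is needed: the affine map realising $g=h_+$ is simply ``apply $h_+$'', and transporting the explicit representatives of $\alpha$ and $\beta$ of Figure~\ref{fig:mz} through it produces, when inspected in the target copy $M(h_+z)$, exactly the standard basis acted on by $h_+$ --- the sheared slit obstructs neither isotopy precisely because $x+y$ never left $[-\tfrac12,\tfrac12)$. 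Hence $(h_+)_*(z)=h_+$ in this case.

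If $z\notin S$, that is $x+y\in[-1,-\tfrac12)\cup[\tfrac12,1)$, bringing $h_+z$ back into $T_0^2$ requires first subtracting the lattice vector $(\pm1,0)$ from the sheared first coordinate. This translation is invisible on the bare torus but slides the slit endpoint across a grid line, so the sheared straight slit and the standard slit of $M(h_+z)$ join the \emph{same} pair of singularities while running on opposite sides of that line; they differ by a simple loop of the torus, and transporting the basis curves across that loop replaces $h_+$ by $h_+^{-1}$. This can be organised cleanly: $(h_+)_*(z)$ is locally constant in $z$, hence constant on each connected component of its natural domain; on the strip $S$ it equals $h_+$ by the previous step, and on the complementary region --- two triangles, exchanged by $z\mapsto-z$, along which $(h_+)_*$ is unchanged since $h_+$ commutes with $-\id$ and $(-\id)_*(z)=\id$ --- it suffices to evaluate $(h_+)_*$ at one convenient point (or to exhibit the correction loop once) to conclude that it equals $h_+^{-1}$ there. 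As a consistency check one may recompute $\omega_*(z)=(h_+h_-^{-1}h_+)_*(z)$ using \eqref{eq:identities} and~\eqref{eq:chainrule} and verify agreement with $\omega^2=-\id$ and $(-\id)_*(z)=\id$.

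The one genuinely delicate step is this last case: making rigorous the slogan ``the slit now runs the other way around, so $h_+$ becomes $h_+^{-1}$'' --- whether via the explicit correction loop or via a careful single evaluation --- needs picture-driven bookkeeping of which side of the slit each transported arc runs along and of the resulting winding numbers, together with a separate treatment of the boundary points $x=-\tfrac12$ or $y=-\tfrac12$, where $-z$ wraps around and the two bases must be matched directly. These are precisely the verifications carried out in \cite[][Lemma~3.3]{FraczekUlcigrai:tube}, to which we defer the details.
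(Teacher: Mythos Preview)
The paper does not actually prove this lemma: it simply states that the result is Lemma~3.3 of \cite{FraczekUlcigrai:tube} and refers the reader there. Your proposal goes further than the paper by sketching the argument---the $J$-symmetry reducing $h_-$ to $h_+$, the dichotomy according to whether $h_+z$ already lies in $T_0^2$, and the local-constancy reduction of the second case to a single evaluation---before ultimately deferring the delicate bookkeeping to the very same reference. In that sense your write-up strictly subsumes what the paper offers, and the outline you give is faithful to the strategy in \cite{FraczekUlcigrai:tube}; there is nothing to compare beyond noting that the paper chose not to reproduce any of this.
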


\begin{figure}[t]
\centering
\def\svgwidth{0.5\textwidth}
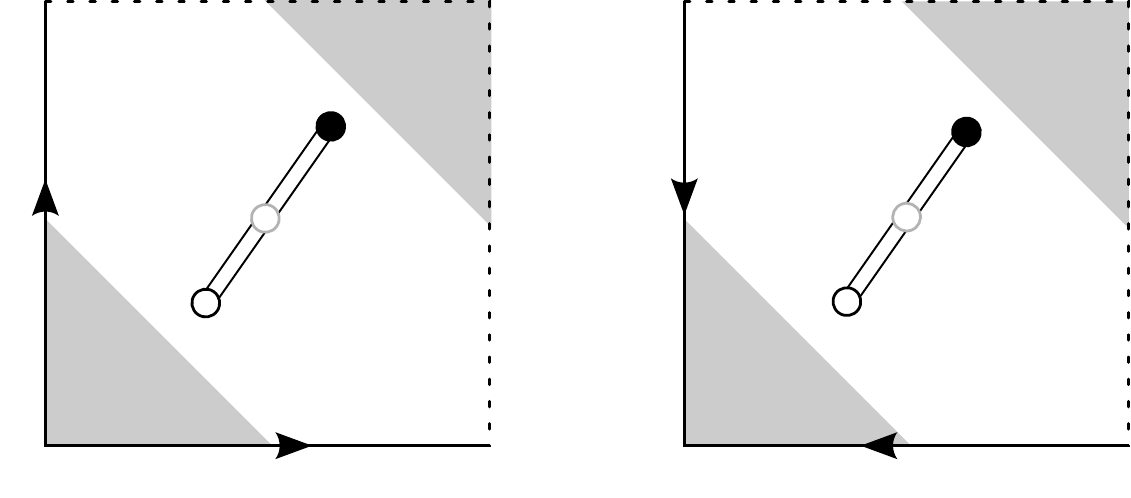
\caption{The set $S$ defined in the Lemma~\ref{thm:keylemma}.}
\label{fig:S}
\end{figure}

The previous Lemma enables us to evaluate precisely the action induced from $H_1^{(0)}(M(z);\ZZ)$ to $H_1^{(0)}(h_\pm \cdot M(z);\ZZ)$ by $h_\pm$.
For a moment, it is worth to distinguish between the punctured torus $\TT_0^2$ and its fundamental domain $T_0^2$ defined by~\eqref{eq:fundamentaldomain}.
In fact, remark that
\[
	z\in S \iff h_\pm(z)\in T_0^2.
\]
Writing $\chi=\chi_{T_0^2}$ for the indicator function of $T_0^2$, we can thus rephrase Lemma~\ref{thm:keylemma} as
\[
	(h_\pm)_*(z)=h_\pm^{1-2\chi(h_\pm(z))}.
\]
In other words, if after applying the transformation $h_\pm$ the endpoint $z$ of the slit is outside the fundamental domain $T_0^2$, then the induced action on the zero holonomy homology is reduced by $2$.
Iterating this reasoning, we get the following

\begin{cor}\label{thm:homologyaction}
Suppose that we apply the transformation $h_-^n$, for some natural number $n$, to the surface $M(z)$, with $z\in \TT_0^2$.
Write 
\[
	y+nx=k+\{\!\{y+nx\}\!\},
\]
where $\{\!\{y+nx\}\!\}\in\bigl[-\tfrac{1}{2},\tfrac{1}{2}\bigr)$ is the displacement from the nearest integer and $k\in\ZZ$.
Then
\[
	(h_-^n)_*(z)=h_-^{n-2 \abs{k}}.
\]

Similarly, consider the transformation $h_+^m$, for some natural number $m$.
If $x+my=l+\{\!\{x+my\}\!\}$, where $l\in\ZZ$, we have $(h_+)_*(z)=h_+^{m-2\abs{l}}$.
\end{cor}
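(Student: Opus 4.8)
The plan is to iterate Lemma~\ref{thm:keylemma} along the orbit $z, h_-(z), h_-^2(z), \dots, h_-^n(z)$, using the cocycle relation~\eqref{eq:chainrule} to split $(h_-^n)_*(z)$ into a product of the single-step contributions $(h_-)_*(h_-^j(z))$ for $j=0,\dots,n-1$. By Lemma~\ref{thm:keylemma} each such factor is $h_-$ if $h_-^j(z)\in S$ and $h_-^{-1}$ otherwise, equivalently $h_-^{1-2\chi(h_-^{j+1}(z))}$ in the reformulation given just before the statement. So $(h_-^n)_*(z) = h_-^{\,n-2N}$, where $N = \#\{\, 0\le j\le n-1 : h_-^{j}(z)\notin S\,\}$ counts the steps at which the slit endpoint leaves $S$ (equivalently, the steps at which $h_-^{j+1}(z)$ exits the fundamental domain $T_0^2$). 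The whole content is then to identify this count $N$ with $\abs{k}$, where $k$ is the nearest integer to $y+nx$.

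First I would write out the dynamics explicitly. The map $h_-$ acts on $\RR^2$ by $(x,y)\mapsto(x, x+y)$, hence $h_-^j(z)$ lifts to $(x, y+jx)$ in the plane; projecting to $T_0^2$ means reducing the second coordinate to $[-\tfrac12,\tfrac12)$. Since $z\in S$ iff the reduced representative has $x+y\in[-\tfrac12,\tfrac12)$, and here the first coordinate $x$ is fixed, the relevant quantity at step $j$ is $x + \{\!\{y+jx\}\!\}$, i.e.\ whether adding one more copy of $x$ to $y+jx$ pushes its reduction into or out of the half-open interval $[-\tfrac12,\tfrac12)$. The key observation is that the integer part of $y+jx$, call it $k_j$ with $y+jx = k_j + \{\!\{y+jx\}\!\}$, is non-decreasing or non-increasing in $j$ according to the sign of $x$ (because $0 < \abs{x} \le \tfrac12 < 1$, the sequence $\{\!\{y+jx\}\!\}$ can cross a half-integer boundary at most once per step, so $k_{j+1} - k_j \in\{0, \pm1\}$ with constant sign equal to $\operatorname{sign}(x)$). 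Therefore the number of indices $j\in\{0,\dots,n-1\}$ at which $k_{j+1}\neq k_j$ — which is exactly the number of steps at which the reduced point jumps across a fundamental-domain boundary, i.e.\ the number of $j$ with $h_-^j(z)\notin S$ — equals $\abs{k_n - k_0}$; and since we may normalise $z\in T_0^2$ so that $k_0 = 0$ (the representative $(x,y)$ has $y\in[-\tfrac12,\tfrac12)$, so $k_0=0$, matching the statement's convention $k = k_n$), this count is precisely $\abs{k}$.

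Assembling: $(h_-^n)_*(z) = \prod_{j=0}^{n-1} (h_-)_*(h_-^j(z)) = h_-^{\,n - 2\abs{k}}$, which is the claim; the case of $h_+^m$ is identical after swapping the roles of $x$ and $y$ (the map $h_+$ acts by $(x,y)\mapsto(x+y,y)$, so one tracks $\{\!\{x+my\}\!\}$ and the nearest integer $l$ to $x+my$), with the analogous monotonicity of the integer part in the step parameter. I would note in passing the boundary subtleties — whether ``$\notin S$'' should be counted when the reduced point lands exactly on $-\tfrac12$ — but these are absorbed into the half-open conventions already fixed in~\eqref{eq:fundamentaldomain} and in the definition of $\{\!\{\,\cdot\,\}\!\}\in[-\tfrac12,\tfrac12)$, so no genuine ambiguity arises.

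The main obstacle is the bookkeeping in the second paragraph: one must verify cleanly that, because $\abs{x}\le\tfrac12$, the integer part $k_j$ of $y+jx$ changes by at most one at each step and always in the same direction, so that the total number of boundary crossings collapses to the single quantity $\abs{k_n}$ rather than an alternating sum. This is elementary but is the one place where the half-open interval conventions and the bound $R<\tfrac12$ (which forces $\abs{x},\abs{y}<\tfrac12$ for the relevant $z$) actually do the work; everything else is a formal consequence of the cocycle identity~\eqref{eq:chainrule} and Lemma~\ref{thm:keylemma}.
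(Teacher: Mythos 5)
Your proof is correct and takes essentially the same route the paper intends: the corollary is stated there as an immediate iteration of Lemma~\ref{thm:keylemma} via the cocycle relation~\eqref{eq:chainrule}, and your contribution is simply to spell out the bookkeeping that identifies the number of exits from $S$ along the orbit with $\abs{k}$, using that the integer part $k_j$ of $y+jx$ changes by at most one per step and always in the same direction. (One small quibble: the bound $\abs{x}\le\tfrac{1}{2}$ comes from $z$ lying in the fundamental domain $T_0^2$, not from the condition $R<\tfrac{1}{2}$.)
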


\section{Ergodic directions for rational $r$, $s$}
In this section we construct ergodic directions for the surface $M(z)_\Gamma$ under the assumption that $z$ is rational, that is $z=(r/2q, s/2q)\in \TT_0^2$ with $r,s,q\in\ZZ$, $\abs{r},\abs{s}<q$, and $s$ is non-zero and coprime with $q$.
We have

\begin{lemma}\label{thm:lemmaodd}
Suppose that at least one number $s$ or $r$ is odd.
Let $a, d$ be natural numbers such that
\begin{equation}\label{eq:conditionodd}
	4q<a,d \leq 6q \quad \text{and} \quad r+as \equiv -q \pmod{2q}, \quad ds-r\equiv -q\pmod{2q},
\end{equation}
and let $n=8qm$, for some $m\in\NN$.
Then, setting
\[
	g_z (n) = h_+^{d-1}\cdot h_-\cdot h_+\cdot h_-^d\cdot h_+^n\cdot h_-^{a-1}\cdot h_+\cdot h_-\cdot h_+^a\cdot h_-^n\in \SL(2,\ZZ),
\]
we have 
\[
	g_z(n) \cdot M(z) = M(z), \qquad \text{ and } \qquad (g_z(n))_*(z)=\id.
\]
\end{lemma}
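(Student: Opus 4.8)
The plan is to verify both assertions at once by following the $\SL(2,\ZZ)$-orbit of the endpoint $z$ of the slit through the ten elementary factors $h_\pm^{\bullet}$ making up $g_z(n)$, applied to $z$ from the right. First I would reduce to the case $s>0$: the congruences in~\eqref{eq:conditionodd} are invariant under $(r,s)\mapsto(-r,-s)$ (because $q\equiv-q\pmod{2q}$), and so is the parity hypothesis, so $g_{-z}(n)=g_z(n)$; since $|r|,|s|<q$ forces $z,-z\in F$, the first Lemma of Section~\ref{sec:ergodicdirections}, applied with base point $-z$, lets the two conclusions for $z$ be deduced from those for $-z$. So I may assume $0<s<q$.

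Now I trace the orbit. As $n=8qm$ and every orbit point lies in $\tfrac1{2q}\ZZ^2/\ZZ^2$, each of the two factors $h_\pm^n$ fixes the point it is applied to, so for the first assertion only the short factors matter. The congruence $r+as\equiv-q\pmod{2q}$ gives $h_+^a z=(-\tfrac12,\tfrac s{2q})$; then $h_-$, $h_+$, $h_-^{a-1}$ carry this — reducing into $T_0^2$ at each step, which is where $0<s<q$ and $|r|<q$ are used to avoid the four deleted points — to $(-\tfrac12,\tfrac{s-q}{2q})$, then $(\tfrac s{2q},\tfrac{s-q}{2q})$, then, using $as\equiv-q-r$ once more, to $w:=(\tfrac s{2q},-\tfrac r{2q})$. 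After $h_+^n$ fixes $w$, the congruence $ds-r\equiv-q\pmod{2q}$ gives $h_-^d w=(\tfrac s{2q},-\tfrac12)$, and $h_+$, $h_-$, $h_+^{d-1}$ carry this to $(\tfrac{s-q}{2q},-\tfrac12)$, then $(\tfrac{s-q}{2q},\tfrac s{2q})$, then back to $z$. Hence $g_z(n)z=z$ in $\TT_0^2$, so $g_z(n)\cdot M(z)=M(z)$.

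For the induced homology action I would expand $(g_z(n))_*(z)$ by the chain rule~\eqref{eq:chainrule} into the ordered product of the ten factors $(h_\pm^{\bullet})_*(w_i)$, the $w_i$ being exactly the points just listed, and evaluate each: Lemma~\ref{thm:keylemma} for the single $h_\pm$'s — the points $(-\tfrac12,\tfrac s{2q})$ and $(\tfrac s{2q},-\tfrac12)$ lie in $S$, while the two points with $x+y=\tfrac{s-2q}{2q}$ do not — and Corollary~\ref{thm:homologyaction} for the $h_\pm^n$, $h_\pm^a$, $h_\pm^d$. The hypothesis $4q<a,d$ forces the relevant integer parts $t=\tfrac{r+as+q}{2q}$, $t'=\tfrac{ds-r+q}{2q}$ (and $K:=t-1$, $K':=t'-1$) to be positive, so every $|\cdot|$ opens with a $+$; in particular the factors coming from $h_-^n$ and $h_+^n$ are $h_-^{B}$ and $h_+^{B}$ with the \emph{same} exponent $B=8m(q-|r|)$ — this is where one uses that $w=(\tfrac s{2q},-\tfrac r{2q})$, so $h_+^n$ sees the $y$-coordinate $-\tfrac r{2q}$. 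Setting $e=d-2K'-2$ and $f=a-2K-2$, the product becomes
\[
	\bigl(h_+^{e+1}h_-^{-1}h_+h_-^{e}\bigr)\,h_+^{B}\,\bigl(h_-^{f+1}h_+^{-1}h_-h_+^{f}\bigr)\,h_-^{B},
\]
and the identities in~\eqref{eq:identities} finish it: $h_-^{-1}h_+=h_+^{-1}\omega$ together with $\omega h_-^{e}\omega^{-1}=h_+^{-e}$ collapses the first parenthesis to $\omega$, similarly $h_+^{-1}h_-=h_-^{-1}\omega^{-1}$ and $\omega^{-1}h_+^{f}\omega=h_-^{-f}$ collapse the third to $\omega^{-1}$, and then $\omega h_+^{B}\omega^{-1}h_-^{B}=h_-^{-B}h_-^{B}=\id$.

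The main obstacle is the boundary bookkeeping: the orbit is deliberately routed through points with a coordinate equal to $-\tfrac12$ and with $x+y$ sitting on or just outside $\partial S$, so at each of the ten steps one must check that the point really lies in $T_0^2$ and that Lemma~\ref{thm:keylemma} and Corollary~\ref{thm:homologyaction} apply with the stated sign — and one must keep track of all the $\pm1$'s, the whole identity being read in $\PGL(2,\ZZ)$. The constraints $0<|s|<q$, $|r|<q$, $\gcd(s,q)=1$ and ``$r$ or $s$ odd'' are precisely what make this run, the parity and coprimality also being what guarantees that $a,d$ solving~\eqref{eq:conditionodd} exist in the window $(4q,6q]$.
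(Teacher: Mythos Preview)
Your proposal is correct and follows essentially the same route as the paper's own proof: the same reduction to $s>0$ via the $-\id$ symmetry, the same ten-step orbit trace through the factors of $g_z(n)$ to land back at $z$, the same evaluation of each $(h_\pm^{\bullet})_*$ via Lemma~\ref{thm:keylemma} and Corollary~\ref{thm:homologyaction}, and the same algebraic collapse using the identities~\eqref{eq:identities}. Your notation $(B,e,f,t,K,\dots)$ differs from the paper's $(t,\tilde d,\tilde a,k,\dots)$ and you group the final cancellation slightly differently (first collapsing the two short blocks to $\omega$ and $\omega^{-1}$, then cancelling $\omega h_+^{B}\omega^{-1}h_-^{B}$), but the computation is the same.
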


\begin{proof}
Let us first show that if suffices to show the result when $s>0$.
In fact, if $s<0$ then we can consider $-z\in \TT_0^2$ and find a transformation such that $g_{-z}\cdot M(-z)=M(-z)$ and $(g_{-z})_*(-z)=\id$.
By assumption $z\in F$, so from~\eqref{eq:gidentities} we have $g_{-z}\cdot M(z)=M(z)$ and $(g_{-z})_*(z)=\id$, which is want we wanted.

Now write
\[
	(h_-^n)_*(z) = h_-^t.
\]
Remark that we have
\[
	t=n-2 \abs{4mr},
\]
since $h_-^n(z)=\bigl(\tfrac{r}{2q},\tfrac{s}{2q}+4mr\bigr)$.
Since $s$ and $q$ are coprime and either $s$ or $r$ is odd, there exist $a,d,k,l\in\NN$ such that $4q<a,d\leq 6q$ and
\begin{equation}\label{eq:conditionad}
	\begin{split}
		r+as &= 2qk-q, \\ 
		ds-r  &=2ql -q.
	\end{split}
\end{equation}
We  have
\begin{gather*}
	h_-^n\left(\frac{r}{2q},\frac{s}{2q}\right)=\left(\frac{r}{2q},\frac{s+8mr}{2q}\right),\\
	h_+^a\left(\frac{r}{2q},\frac{s}{2q}\right) = \left(\frac{r+as}{2q},\frac{s}{2q}\right) = \left(k-\frac{1}{2},\frac{s}{2q}\right),\\
	h_-\left(-\frac{1}{2},\frac{s}{2q}\right) = \left(-\frac{1}{2},\frac{s}{2q}-\frac{1}{2}\right), \qquad
	h_+\left(-\frac{1}{2},\frac{s}{2q}-\frac{1}{2}\right) = \left(\frac{s}{2q},\frac{s}{2q}-\frac{1}{2}\right),\\
	h_-^{a-1}\left(\frac{s}{2q},\frac{s}{2q}-\frac{1}{2}\right) = \left(\frac{s}{2q},\frac{(a-1)s+s-q}{2q}\right) = \left(\frac{s}{2q},\frac{-r+2(k-1)q}{2q}\right),\\
	h_+^n\left(\frac{s}{2q},-\frac{r}{2q}\right) = \left(\frac{s-8mqr}{2q},-\frac{r}{2q}\right) = \left(\frac{s}{2q}-r,-\frac{r}{2q}\right),\\
	h_-^d\left(\frac{s}{2q},-\frac{r}{2q}\right) = \left(\frac{s}{2q},\frac{ds-r}{2q}\right) = \left(\frac{s}{2q},l-\frac{1}{2}\right),\\
	h_+\left(\frac{s}{2q},-\frac{1}{2}\right) = \left(\frac{s}{2q}-\frac{1}{2},-\frac{1}{2}\right), \qquad
	h_-\left(\frac{s}{2q}-\frac{1}{2},-\frac{1}{2}\right) = \left(\frac{s}{2q}-\frac{1}{2}, \frac{s}{2q}\right),\\
	h_+^{d-1}\left(\frac{s}{2q}-\frac{1}{2}, \frac{s}{2q}\right) = \left(\frac{(d-1)s+s-q}{2q}, \frac{s}{2q}\right) = \left(\frac{2(l-1)q +r}{2q}, \frac{s}{2q}\right),
\end{gather*}
so $g_z(n)\cdot M(z)=M(z)$.

We now have to analyse the action of the induced transformation on the zero holonomy homology.
We write
\[
	(h_+^a)_*\left(\frac{r}{2q},\frac{s}{2q}\right) = h_+^{\tilde{a}}.
\]
Corollary~\ref{thm:homologyaction} gives us that $\tilde{a} = a -2k$, $k$ being the natural number defined by~\eqref{eq:conditionad}.
Since $a>4q$, we have that $\tilde{a}>0$.
Writing
\[
	(h_-^d)_*\left(\frac{s}{2q},-\frac{r}{2q}\right) = h_-^{\tilde{d}},
\]
with $\tilde{d}=d-2l>0$.
We want to show that we have
\[
\begin{split}
	(g_z(n))_*(z) &= h_+^{\tilde{d}+1}\cdot h_-^{-1}\cdot h_+\cdot h_-^{\tilde{d}}\cdot h_+^t\cdot h_-^{\tilde{a}+1}\cdot h_+^{-1}\cdot h_-\cdot h_+^{\tilde{a}}\cdot h_-^t\\
			&= h_+^{\tilde{d}}\cdot \omega\cdot h_-^{\tilde{d}}\cdot h_+^t\cdot h_-^{\tilde{a}}\cdot \omega^{-1}\cdot h_+^{\tilde{a}}\cdot h_-^t\\
			&= h_+^{\tilde{d}}\cdot h_+^{-\tilde{d}}\cdot h_-^{-t}\cdot h_+^{-\tilde{a}}\cdot h_+^{\tilde{a}}\cdot h_-^t\\
			&= h_-^{-t}\cdot h_-^t=\id,
\end{split}
\]
where we used the identities~\eqref{eq:identities}.

As $0<s<q$, we have that $(-1/2,s/2q) \in S$ and $(-1/2,s/2q-1/2) \notin S$, so we have
\[
	(h_-)_*\left(-\frac{1}{2},\frac{s}{2q}\right) = h_-, 	\qquad \text{and} \qquad	(h_+)_*\left(-\frac{1}{2},\frac{s}{2q}-\frac{1}{2}\right)=h_+^{-1}.
\]
Moreover
\[
	(h_+)_*\left(\frac{s}{2q}, -\frac{1}{2}\right) = h_+,	\qquad \text{and} \qquad	(h_-)_*\left(\frac{s}{2q}-\frac{1}{2},-\frac{1}{2}\right)=h_-^{-1}.
\]
Carrying our analysis further, we have,
\[
	(h_-^{a-1})_* \left(\frac{s}{2q},\frac{s}{2q}-\frac{1}{2}\right) = h_-^{ a-1 -2(k-1)} = h_-^{a-2k+1}=h_-^{\tilde{a}+1}.
\]
Similarly
\[
	(h_+^{d-1})_* \left(\frac{s}{2q}-\frac{1}{2}, \frac{s}{2q}\right) = h_+^{d-1-2(l-1)} = h_+^{\tilde{d}+1}.
\]

To conclude, we only have to show that
\[
	(h_+^n)_* \left(\frac{s}{2q},-\frac{r}{2q}\right) = h_+^t.
\]
This is clear since $(s-8mqr)/2q= s/2q-4mr$ and thus Corollary~\ref{thm:homologyaction} gives
\[
	(h_+^n)_*\left(\frac{s}{2q},-\frac{r}{2q}\right) = h_+^{n-2\abs{4mr}} = h_+^t.
\]
This completes the proof of the Lemma.
\end{proof}

To get a lower bound on the Hausdorff dimension on the set of ergodic directions we will use the following standard result.

\begin{prop}\label{thm:Hausdorffdimension}
For any $\bar{a},\bar{b}\in\NN^m$, with $m\geq0$, and for any set $D=d\NN+c\subset\NN$, with $d,c\in\NN$ and $d>0$, $c\geq0$, the Hausdorff dimension of the set
\[
	\mathcal{E}(\bar{a},\bar{b})=\Set{[0;\bar{a},n_1,\bar{b},n_1,\bar{a},n_2,\bar{b},n_2,\dots]: n_i\in D}
\]
is greater than $1/2$.
\end{prop}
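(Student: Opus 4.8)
The plan is to realise $\mathcal{E}(\bar a,\bar b)$ as the limit set of an infinite conformal iterated function system, reduce by monotonicity of Hausdorff dimension to a finite subsystem, and bound the dimension of that subsystem from below by a mass distribution (Frostman) argument; the point past $1/2$ will come from the fact that $D$, being an infinite arithmetic progression, makes $\sum_{n\in D}n^{-2s}$ diverge exactly as $s\downarrow\tfrac12$.

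First I would set up the system. For a finite word $w=(c_1,\dots,c_k)$ of positive integers, let $\Psi_w\colon[0,1]\to[0,1]$ be the M\"obius contraction $\Psi_w(t)=\dfrac{p_k+tp_{k-1}}{q_k+tq_{k-1}}$, where $p_j/q_j$ are the convergents of $[0;c_1,\dots,c_k]$; then $\Psi_w([0,1])$ is the set of points whose continued fraction starts with $w$, one computes $\lvert\Psi_w'(t)\rvert=(q_k+tq_{k-1})^{-2}$, and concatenation of words corresponds to composition of the maps. Discard the value $0$ from $D$ if it occurs (this changes nothing), and for $n\in D$ let $\psi_n$ be the M\"obius contraction attached to one period of the pattern defining $\mathcal{E}(\bar a,\bar b)$; then $\mathcal{E}(\bar a,\bar b)$ is precisely the limit set of $\Phi=\{\psi_n\}_{n\in D}$. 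Since the images $\psi_n([0,1])$ have pairwise disjoint interiors, $\Phi$ satisfies the open set condition with open set $(0,1)$, and from the formula for $\Psi_w'$ it has distortion bounded by $4$: writing $q_n$ for the denominator attached to $\psi_n$, one has $\tfrac14 q_n^{-2}\le\inf\lvert\psi_n'\rvert\le\sup\lvert\psi_n'\rvert=q_n^{-2}$.

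The one genuinely computational ingredient is a uniform control of $q_n$. Using the recursion $q_j=c_jq_{j-1}+q_{j-2}$ and the fact that within one period only the free partial quotient $n$ is unbounded, one proves a two-sided bound $c_1(\bar a,\bar b)\,n\le q_n\le c_2(\bar a,\bar b)\,n$, so that $\inf\lvert\psi_n'\rvert\asymp_{\bar a,\bar b}n^{-2}$. Granting this, the classical lower bound for the dimension of the limit set of a finite conformal IFS satisfying the open set condition with bounded distortion --- which one obtains by spreading over $\{\psi_n\}_{n\in D_0}$ the self-conformal measure $\mu$ with weights proportional to $(\inf\lvert\psi_n'\rvert)^{s}$ and checking $\mu(B(x,r))\le Cr^{s}$ through the open set condition --- gives, for every finite $D_0\subset D$,
\[
	\dim_H\mathcal{E}(\bar a,\bar b)\ \ge\ \dim_H\bigl(\text{limit set of }\{\psi_n\}_{n\in D_0}\bigr)\ \ge\ s\qquad\text{whenever}\qquad\sum_{n\in D_0}\bigl(\inf\lvert\psi_n'\rvert\bigr)^{s}\ \ge\ 1 .
\]

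It remains to pick $s>1/2$. By the estimate above, the right-hand condition follows from $c(\bar a,\bar b)\sum_{n\in D_0}n^{-2s}\ge1$ with a suitable $c(\bar a,\bar b)>0$. Since $D=d\NN+c$ is an infinite arithmetic progression, $\sum_{n\in D}n^{-2s}\to+\infty$ as $s\downarrow\tfrac12$, the tail $\sum_{n\in D}n^{-1-2\epsilon}$ being comparable to $\tfrac1{2\epsilon d}$ for small $\epsilon>0$; hence for every small enough $\epsilon>0$ there is a finite $D_0\subset D$ with $c(\bar a,\bar b)\sum_{n\in D_0}n^{-1-2\epsilon}\ge1$, which yields $\dim_H\mathcal{E}(\bar a,\bar b)\ge\tfrac12+\epsilon>\tfrac12$. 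I expect the main obstacle to be only the uniform denominator and distortion bookkeeping in the middle step; the reduction, the mass distribution argument and the open set condition verification are routine, and the passage past $1/2$ is forced by the divergence of $\sum_{n\in D}n^{-1}$.
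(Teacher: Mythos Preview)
Your strategy---encode $\mathcal{E}(\bar a,\bar b)$ as the attractor of the conformal IFS $\{\psi_n\}_{n\in D}$ with one map per period, restrict to a finite subsystem, and invoke the Falconer lower bound $\sum_{n\in D_0}(\inf\lvert\psi_n'\rvert)^s\ge 1\Rightarrow\dim_H\ge s$---is exactly the paper's. The gap is in the denominator estimate. One period of the defining pattern is the word $(\bar a,n,\bar b,n)$, in which the free entry $n$ occurs \emph{twice}, not once; running the recursion $q_j=c_jq_{j-1}+q_{j-2}$ through both occurrences gives $q_n\asymp_{\bar a,\bar b} n^{2}$, and hence $\inf_{[0,1]}\lvert\psi_n'\rvert\asymp n^{-4}$ rather than $n^{-2}$. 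The controlling series is therefore $\sum_{n\in D}n^{-4s}$, which diverges only for $s\le 1/4$; your argument yields $\dim_H\mathcal{E}(\bar a,\bar b)>1/4$, but at $s=1/2$ the partial sums are $c(\bar a,\bar b)\sum_{n\in D_0}n^{-2}$, bounded and---for long words $\bar a,\bar b$, as in the paper's application---far below $1$.

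The paper's own proof follows the same route and contains the same oversight: its quantity $e_{\bar a,\bar b,l}=\bigl(q_m(\bar a,\bar b,l)(l+1)+q_{m-1}(\bar a,\bar b,l)\bigr)^{-2}$ has $q_m(\bar a,\bar b,l)$ (the last denominator attached to the word $(\bar a,l,\bar b)$) already linear in $l$, so that $e_{\bar a,\bar b,l}\asymp l^{-4}$ and the asserted divergence $\sum_{l}e_{\bar a,\bar b,dl+c}^{1/2}=+\infty$ fails. Thus the problem is not in your transcription of the argument but in the argument itself; getting past $1/2$ with one map per full period cannot come from the crude bound $\sum(\inf\lvert\psi_n'\rvert)^s\ge 1$, and would require either a sharper pressure estimate for the infinite system or a different decomposition of $\mathcal{E}(\bar a,\bar b)$ that avoids the quadratic growth forced by the repeated $n$.
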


\begin{proof}
Write $\bar{a}=a_1\dots a_m$ and $\bar{b}=b_1\dots b_m$.
We can assume $m\geq4$ and $m$ even.
For $l\in\NN$, define the map $\psi_{\bar{a},\bar{b},l}\colon[0,1]\to[0,1]$ by
\[
	\psi_{\bar{a},\bar{b},l}(x)=[0;a_1,\dots,a_m,l,b_1,\dots,b_m,l+x]=\frac{p_m(\bar{a},\bar{b},l)(l+x)+p_{m-1}(\bar{a},\bar{b},l)}{q_m(\bar{a},\bar{b},l)(l+x)+q_{m-1}(\bar{a},\bar{b},l)}.
\]
Thus
\[
	\psi_{\bar{a},\bar{b},l}([0,1])=\bigl[ [0;a_1,\dots,a_m,l,b_1,\dots,b_m,l], [0;a_1,\dots,a_m,l,b_1,\dots,b_m,l+1] \bigr].
\]
For $x\in[0,1]$, omitting the dependence of $q_m$ and $q_{m-1}$ by $\bar{a}$, $\bar{b}$ and $l$, one has
\[
	\psi'_{\bar{a},\bar{b},l}(x)=\frac{1}{(q_m(l+x)+q_{m-1})^2}\geq\frac{1}{(q_m(l+1)+q_{m-1})^2}=:e_{\bar{a},\bar{b},l}
\]
and
\[
	\psi'_{\bar{a},\bar{b},l}(x)\leq\frac{1}{(q_ml+q_{m-1})^2}<\frac{1}{4}.
\]
Now, for every $u\in\NN$, let $D_u=d\{1,\dots,u\}+c$.
Call
\[
	\mathcal{E}_u(\bar{a},\bar{b})=\bigcap_{k\geq1}\bigcup_{(n_1,\dots,n_k)\in(D_u)^k}
							\psi_{\bar{a},\bar{b},n_1}\circ\psi_{\bar{a},\bar{b},n_2}\circ\dots\circ\psi_{\bar{a},\bar{b},n_k}[0,1].
\]
Then $\mathcal{E}_u(\bar{a},\bar{b})\subset\mathcal{E}(\bar{a},\bar{b})$.

Let
\[
	E_u=\bigl[ [0;\bar{a},du+c,\bar{b},du+c], [0;\bar{a},d+c,\bar{b},d+c+1]\bigr].
\]
Remark that, since $m$ is even, $[0;\bar{a},du+c,\dots]<[0;\bar{a},d+c,\dots]$.
By definition of $\psi_{\bar{a},\bar{b},l}$ we have that $\psi_{\bar{a},\bar{b},l}(E_u)\subset E_u$ for all $l\in D_u$.
Moreover, the intervals $\psi_{\bar{a},\bar{b},l}(E_u)$ are pairwise disjoint.
Proposition 9.7 of~\cite{Falconer:FractalGeometry} assures that the Hausdorff dimension $\dim_H(\mathcal{E}_u(\bar{a},\bar{b}))\geq s_u$, where $s_u>0$ is the unique solution to the equation 
\[
	\sum_{l=1}^u e_{\bar{a},\bar{b},dl+c}^{s_u}=1.
\]
Since $\sum_{l=1}^\infty e_{\bar{a},\bar{b},dl+c}^{1/2}=+\infty$, we can find a $u\in\NN$ such that $\sum_{l=1}^u e_{\bar{a},\bar{b},dl+c}^{1/2}>1$.
In particular $s_u>1/2$ and so
\[
	\dim_H(\mathcal{E}(\bar{a},\bar{b}))\geq\dim_H(\mathcal{E}_u(\bar{a},\bar{b}))\geq s_u >1/2,
\]
as we wanted to show.
\end{proof}

\section{Results for $\widetilde{M}(z)_\Gamma$}\label{sec:proofMR}

\begin{thm}\label{thm:rationaldirections}
Suppose that $z=(r/2q,s/2q)\in \TT_0^2$, where $r$, $s$ and $q$ are integer numbers with $\abs{r},\abs{s}<q$ and $s\neq0$ is coprime with $q$.
Suppose moreover that at least one number between $r$ and $s$ is odd.
For every sequence of natural numbers $(n_i)_{i\geq1}$ in $8q\NN$, set
\[
	\vartheta=[0;d-1,1,1,d,n_1,a-1,1,1,a,n_1,d-1,1,1,d,n_2,a-1,\dots].
\]
Then the directional flow along the direction $\theta$ of the vector $(1,\vartheta)$ on the $\ZZ^2$-cover $\widetilde{M}(z)_\Gamma$ given by $\Gamma=(\beta,-\alpha)$ is \emph{ergodic}.
Furthermore, the Hausdorff dimension of the set of such directions is bigger than $1/2$.
\end{thm}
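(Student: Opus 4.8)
The plan is to deduce Theorem~\ref{thm:rationaldirections} by combining Lemma~\ref{thm:lemmaodd}, which produces the required algebraic relations, with Theorem~\ref{thm:conditionaltheorem}, which converts those relations into ergodicity, and finally with Proposition~\ref{thm:Hausdorffdimension}, which furnishes the Hausdorff dimension bound. The three ingredients fit together almost mechanically once the continued fraction in the statement is matched against the hypotheses of the conditional theorem.

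First I would fix the data: write $z=(r/2q,s/2q)\in\TT_0^2$ with the stated parity and coprimality, and choose natural numbers $a,d$ satisfying~\eqref{eq:conditionodd} (so $4q<a,d\leq 6q$ and the congruences hold); such $a,d$ exist by the argument at the start of the proof of Lemma~\ref{thm:lemmaodd}, using that $s$ is invertible mod $2q$ when $\gcd(s,q)=1$ and one of $r,s$ is odd. For each sequence $(n_i)_{i\geq1}$ in $8q\NN$, set $\vartheta=[0;d-1,1,1,d,n_1,a-1,1,1,a,n_1,d-1,1,1,d,n_2,\dots]$ as in the statement. The key observation is that the block $d-1,1,1,d,n_i,a-1,1,1,a,n_i$ of ten partial quotients corresponds, under the identification $h_+^{a_1}\cdots h_-^{a_{2j}}=\left(\begin{smallmatrix}q_{2j}&q_{2j-1}\\p_{2j}&p_{2j-1}\end{smallmatrix}\right)$ recalled in Step~1 of the proof of Theorem~\ref{thm:conditionaltheorem}, exactly to the matrix $g_z(n_i)=h_+^{d-1}h_-h_+h_-^dh_+^{n_i}h_-^{a-1}h_+h_-h_+^ah_-^{n_i}$ of Lemma~\ref{thm:lemmaodd} (reading the word from the appropriate end and being careful about which of $h_+,h_-$ the continued fraction algorithm starts with). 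Hence, if $k_n$ is the position in the continued fraction expansion of $\vartheta$ at the end of the $n$-th such block — which is an even number, since each block has even length $10$ — then $h_+^{a_1}\cdots h_-^{a_{k_n}}=g_z(n_1)g_z(n_2)\cdots$ is a product of matrices each fixing $M(z)$ and acting trivially on the zero-holonomy homology, so by~\eqref{eq:chainrule} the whole product satisfies $\bigl(h_+^{a_1}\cdots h_-^{a_{k_n}}\bigr)\cdot M(z)=M(z)$ and $\bigl(h_+^{a_1}\cdots h_-^{a_{k_n}}\bigr)_*(z)=\id$. This is the first half of the hypotheses of Theorem~\ref{thm:conditionaltheorem}.

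For the second half I would check the size conditions on the partial quotients adjacent to $k_n$. The partial quotient $a_{k_n+1}$ is the first entry $d-1$ of the next block, and $a_{k_n}$ is the last entry $n_n$ of the current block. Since $n_n\in 8q\NN$ with $q\geq1$, and since $|x|=|r|/2q<1/2$, one has $1-2|x|=(q-|r|)/q\geq 1/q$, so picking the $n_i$ large enough (they range over the infinite set $8q\NN$) makes $a_{k_n}=n_n\geq 4(1+\epsilon)/(1-2|x|)$ automatically; one must, however, also arrange $a_{k_n+1}=d-1\geq 2(1+\epsilon)/(1-2|y|)$, which forces a constraint relating $d$ to $q$ and $\epsilon$. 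Here I expect a small amount of care: the conditions~\eqref{eq:conditionodd} only give $4q<d\leq 6q$, and $2(1+\epsilon)/(1-2|y|)$ can be as large as $2(1+\epsilon)q/(q-|s|)$, which may exceed $6q$ when $|s|$ is close to $q$. I would handle this by fixing $\epsilon$ small (we are free to, since ergodicity does not depend on $\epsilon$) and, if necessary, strengthening the role of the $n_i$: in fact the indices $a_{k_n+1}$ one actually controls are not forced to be $d-1$ if one inserts the large parameter on the correct side, but the cleanest route is to observe that making $q$ itself enter only through the fixed surface $M(z)$, the inequality $d-1\geq 2(1+\epsilon)/(1-2|y|)$ is a fixed finite condition that either holds for the chosen $a,d$ or can be met by enlarging $a,d$ within a slightly relaxed version of~\eqref{eq:conditionodd} (replacing $6q$ by $Cq$ for a suitable constant). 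Either way, once both size conditions hold, Theorem~\ref{thm:conditionaltheorem} applies verbatim and gives ergodicity of $\tflow$ on $\widetilde{M}(z)_\Gamma$ in the direction of $(1,\vartheta)$.

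Finally, for the Hausdorff dimension statement, I would invoke Proposition~\ref{thm:Hausdorffdimension} with $\bar a=(d-1,1,1,d)$ and $\bar b=(a-1,1,1,a)$, both in $\NN^4$ (so $m=4$, which is even and $\geq 4$ as the proposition requires), and with $D=8q\NN=8q\NN+0$, i.e.\ $d_{\mathrm{Prop}}=8q$, $c=0$. The set $\mathcal{E}(\bar a,\bar b)$ of Proposition~\ref{thm:Hausdorffdimension} is precisely the set of $\vartheta=[0;\bar a,n_1,\bar b,n_1,\bar a,n_2,\bar b,n_2,\dots]$ with $n_i\in 8q\NN$, which is exactly the family of directions in the statement; since by the previous paragraphs every such direction is ergodic (the size conditions on $n_i$ are satisfied once $n_i$ is large, and shrinking to $n_i\in 8q\{u_0,u_0+1,\dots\}$ for a large threshold $u_0$ only affects the proof of the proposition through the choice of the truncation $D_u$, which is fine since $\sum_l e_{\bar a,\bar b,8ql}^{1/2}$ still diverges), the set of ergodic directions contains a set of Hausdorff dimension $>1/2$. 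The main obstacle, as flagged above, is the bookkeeping in reconciling the finite box constraints~\eqref{eq:conditionodd} on $a,d$ with the lower bounds $a_{k_n},a_{k_n+1}\geq \text{const}/(1-2|\cdot|)$ demanded by Theorem~\ref{thm:conditionaltheorem} when $|r|$ or $|s|$ is close to $q$; everything else is a direct assembly of the lemmas already proved.
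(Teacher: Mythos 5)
Your proposal follows essentially the same route as the paper: match each length-ten block of partial quotients of $\vartheta$ to the matrix $g_z(n_i)$ of Lemma~\ref{thm:lemmaodd}, use the chain rule~\eqref{eq:chainrule} to get triviality of the induced homology action at the even times $k_n=10n$, verify the size hypotheses of Theorem~\ref{thm:conditionaltheorem}, and conclude the dimension bound from Proposition~\ref{thm:Hausdorffdimension} with $\bar a=(d-1,1,1,d)$, $\bar b=(a-1,1,1,a)$ and $D=8q\NN$.

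The one point you flag as ``the main obstacle'' is in fact not an obstacle, and no relaxation of~\eqref{eq:conditionodd} or extra thresholding of the $n_i$ is needed. Since $r,s,q$ are integers with $\abs{r},\abs{s}<q$, one has $q-\abs{r}\geq1$ and $q-\abs{s}\geq1$; choosing $\epsilon=1/q$ the two bounds become
\[
	\frac{4(1+\epsilon)}{1-2\abs{x}}=\frac{4q+4}{q-\abs{r}}\leq 4q+4\leq 8q\leq a_{k_n}=n_n,
	\qquad
	\frac{2(1+\epsilon)}{1-2\abs{y}}=\frac{2q+2}{q-\abs{s}}\leq 2q+2\leq 4q\leq d-1=a_{k_n+1},
\]
using $d>4q$ from~\eqref{eq:conditionodd}. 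Your worry that $2(1+\epsilon)q/(q-\abs{s})$ could exceed $6q$ when $\abs{s}$ is close to $q$ overlooks this integrality gap: the denominator is at least $1$, so the quantity is at most $2(1+\epsilon)q$, which is below $4q$ for $\epsilon\leq1$. With this observation your argument closes and coincides with the paper's proof.
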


\begin{proof}
We begin by noting that, grouping the entries of $\vartheta$ in blocks of length ten, we obtain the transformations $g_z(n_1)$, $g_z(n_2),\dots$ of the form described in Lemma~\ref{thm:lemmaodd}.
We then know that
\[
	g_z(n_i)\cdot M(z) = M(z), \qquad \text{and} \qquad (g_z(n_i))_*(z)=\id,
\]
for all $i$.
Writing, as usual, $\vartheta=[0;a_1,a_2,\dots]$ we have
\[
	a_{10k}\geq 8q \geq 4q+4\geq\frac{4q+4}{q-\abs{r}}=\frac{4(1+\epsilon)}{1-\frac{\abs{r}}{q}},
\]
with $\epsilon=\frac{1}{q}$.
Similarly
\[
	a_{10k+1}\geq 4q \geq 2q+2 \geq\frac{2q+2}{q-\abs{s}}=\frac{2(1+\epsilon)}{1-\frac{\abs{s}}{q}}.
\]
Thus all the hypothesis of Theorem~\ref{thm:conditionaltheorem} hold.
So the flow in direction $\theta$ of $(1,\vartheta)$ is ergodic on the infinite surface $\widetilde{M}(z)_\Gamma$.
Finally, the lower bound on the Hausdorff dimension of the set of ergodic directions is given by Proposition~\ref{thm:Hausdorffdimension}.
\end{proof}

To deduce a statement about topological abundance of ergodic directions in $\widetilde{M}(z)_\Gamma$ we adapt to our setting some results developed for $\ZZ$-covers.
If $z$ is rational, Corollary~5.7 of~\cite{HooperWeiss} assure us that the Veech group of $\widetilde{M}(z)_\Gamma$ is a discrete subgroup of $\PSL(2,\RR)$ whose limit set is $\RR\PP^1$.
Recall that a set is a $G_\delta$ if it can be obtained as a countable intersection of open sets.
Then, using the same strategy of Proposition~15 of~\cite{HubertWeiss} we prove the following

\begin{lemma}\label{thm:Gdeltasurface}
Suppose that $z=(r/2q,s/2q)\in \TT_0^2$, where $r$, $s$ and $q$ are integer numbers with $\abs{r},\abs{s}<q$ and $s\neq0$ is coprime with $q$.
Suppose moreover that at least one number $r$ or $s$ is odd.
Then the set of ergodic directions on $\widetilde{M}(z)_\Gamma$ forms a $G_\delta$ dense subset of $S^1$.
\end{lemma}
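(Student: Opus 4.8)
The plan is to show that the set $\mathcal{E}\subset S^1$ of directions in which $\tflow$ is ergodic on $\widetilde{M}(z)_\Gamma$ contains a dense $G_\delta$, and is in addition invariant under a group acting minimally on $S^1$; the two statements together give that $\mathcal{E}$ forms a $G_\delta$ dense subset of $S^1$. The starting point is that $\mathcal{E}$ is invariant under the Veech group of the cover: an affine automorphism of $\widetilde{M}(z)_\Gamma$ with linear part $g$ maps the foliation in direction $\theta$ to the one in direction $g\cdot\theta$ up to translation equivalence, so ergodicity in direction $\theta$ holds if and only if it holds in direction $g\cdot\theta$. Since $z$ is rational, $M(z)$ is a square-tiled surface (a branched double cover of $\RR^2/\ZZ^2$ over the two rational points $\pm z$), hence a lattice surface, and by Corollary~5.7 of~\cite{HooperWeiss} the Veech group of $\widetilde{M}(z)_\Gamma$ is a discrete subgroup of $\PSL(2,\RR)$ with limit set $\RR\PP^1$, which therefore acts minimally on $\RR\PP^1$. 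Density of $\mathcal{E}$ now follows at once: by Theorem~\ref{thm:rationaldirections} the set $\mathcal{E}$ is non-empty, and the orbit of any of its points is dense in $S^1$.

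For the $G_\delta$ statement I would use the ergodicity criterion, Proposition~\ref{thm:criterion}. Let $\mathcal{U}\subset S^1$ be the set of directions in which $\flow$ is uniquely ergodic on the compact surface $M(z)$; since $M(z)$ is a lattice surface, $S^1\setminus\mathcal{U}$ is the countable set of its completely periodic directions, so $\mathcal{U}$ is a dense $G_\delta$. As $M(z)$ is square-tiled, the areas of its cylinders take only finitely many, strictly positive, values; fix a positive $\epsilon$ below all of them. For $k_0=(1,0)$ set
\[
	\mathcal{O}^h_\epsilon=\bigcap_{n\geq 1}\ \bigcup_{\substack{C\subset M(z)\ \text{cylinder}\\ k(C)\equiv\pm k_0,\ \norm{v(C)}>n}}\Set{\theta\in S^1 : \abs{(\cos\theta,\sin\theta)\wedge v(C)}<(1-\epsilon)\frac{A(C)}{2\norm{v(C)}}},
\]
and define $\mathcal{O}^v_\epsilon$ likewise with $k_0=(0,1)$. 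Each inner union is a union of open arcs, so $\mathcal{O}^h_\epsilon$ and $\mathcal{O}^v_\epsilon$ are $G_\delta$ sets, and every direction in $\mathcal{O}^h_\epsilon$ (resp.\ $\mathcal{O}^v_\epsilon$) is well approximated by a sequence of strips producing $(\pm1,0)$ (resp.\ $(0,\pm1)$), the area requirement being automatic. Hence $\mathcal{G}_\epsilon:=\mathcal{U}\cap\mathcal{O}^h_\epsilon\cap\mathcal{O}^v_\epsilon$ is a $G_\delta$ subset of $\mathcal{E}$, and it is non-empty once $\epsilon$ is taken small enough, since it then contains the directions produced in Theorem~\ref{thm:rationaldirections}.

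The heart of the argument, and the step I expect to be the main difficulty, is to show that $\mathcal{G}_{\epsilon'}$ is dense for a suitable fixed $\epsilon'$. The idea is to transport the directions of Theorem~\ref{thm:rationaldirections} around $S^1$ using the subgroup $K\leq V(M(z))$ of linear parts of affine automorphisms of $M(z)$ acting trivially on $H_1^{(0)}(M(z);\ZZ)$. By the chain rule~\eqref{eq:chainrule} together with~\eqref{eq:gidentities} the assignment $g\mapsto g_*(z)$ is a homomorphism $V(M(z))\to\PGL(2,\ZZ)$, so $K$ is normal in $V(M(z))$; it is infinite, since it contains the elements $g_z(n)$, $n\in 8q\NN$, of Lemma~\ref{thm:lemmaodd}; and, $V(M(z))$ being a lattice, an infinite normal subgroup of it has limit set $\RR\PP^1$. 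For $g\in K$ and an affine automorphism $\zeta_g$ of $M(z)$ with linear part $g$, the cylinders $\zeta_g(C_n^h)$ and $\zeta_g(C_n^v)$ (with $C_n^h$, $C_n^v$ the cylinders built in the proof of Theorem~\ref{thm:conditionaltheorem}) have the same areas and the same $k$-classes as $C_n^h$, $C_n^v$ and displacement vectors $g\cdot v(C_n^h)$, $g\cdot v(C_n^v)$; since the areas are preserved and the approximation inequality is affected only by the linear distortion of $g$ between $v(C_n^h)$ and the limiting direction of the approximating sequence---which tends to $1$---the transported direction $g\cdot\theta$ is again well approximated by these strips, with a fixed positive parameter $\epsilon'$ uniform in $g$. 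Thus the $K$-orbit of the non-empty set of directions from Theorem~\ref{thm:rationaldirections} is contained in $\mathcal{O}^h_{\epsilon'}\cap\mathcal{O}^v_{\epsilon'}$ and, $K$ having full limit set, is dense in $S^1$; hence $\mathcal{O}^h_{\epsilon'}$ and $\mathcal{O}^v_{\epsilon'}$ are dense $G_\delta$ sets.

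Putting the pieces together, the Baire category theorem gives that $\mathcal{G}_{\epsilon'}=\mathcal{U}\cap\mathcal{O}^h_{\epsilon'}\cap\mathcal{O}^v_{\epsilon'}$ is a dense $G_\delta$ contained in $\mathcal{E}$, and since $\mathcal{E}$ is a Borel set invariant under a group acting minimally on $S^1$, $\mathcal{E}$ is comeager; hence the set of ergodic directions on $\widetilde{M}(z)_\Gamma$ forms a $G_\delta$ dense subset of $S^1$. The genuinely delicate points are the two just flagged: that $K$ is infinite and normal---for which Lemma~\ref{thm:lemmaodd} and the identities of Section~\ref{sec:ergodicdirections} are precisely what is needed---and that the well-approximation inequality survives the $K$-action with a single parameter, which reduces to the uniform claim that the linear distortion of $g\in K$, evaluated along the approximating sequence of directions, tends to $1$.
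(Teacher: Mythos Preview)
Your argument is correct and lands at the same conclusion as the paper, but the route differs in two places that are worth noting.

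First, the group with full limit set. The paper works directly with the Veech group $G$ of the \emph{cover} $\widetilde{M}(z)_\Gamma$ and simply cites Corollary~5.7 of~\cite{HooperWeiss} for discreteness and full limit set; you instead identify $K=\ker\bigl(g\mapsto g_*(z)\bigr)$ inside the Veech group of the \emph{base} $M(z)$, check via the chain rule~\eqref{eq:chainrule} and~\eqref{eq:gidentities} that this assignment is a genuine homomorphism (which is fine here since $z,-z\in F$), and deduce full limit set from ``infinite normal subgroup of a lattice''. These are essentially the same group viewed from below and from above: an affine automorphism of $M(z)$ with derivative in $K$ fixes $\alpha,\beta$ (up to sign), hence preserves the $\ZZ^2$-cover, so $K$ sits inside the derivative group of the cover. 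Your route is more self-contained; the paper's is shorter.

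Second, the density step. The paper transports a \emph{single} cylinder $C^h$ (resp.\ $C^v$) under $G$: each $gC^h$ still has $k=(1,0)$ because the intersection form is diffeomorphism-invariant, and density of the open sets $B_n$ comes from density of the $G$-orbit of the direction of $v(C^h)$ in $\RR\PP^1$. You instead transport the whole approximating sequence $(C_n^h)$ under each $g\in K$ and must verify that the well-approximation inequality survives. Your analysis is right: for fixed $g$ the ratio $\norm{g\,v(C_n^h)}\big/\bigl(\norm{v(C_n^h)}\cdot\norm{g\,e^{i\theta}}\bigr)\to 1$ since $v(C_n^h)/\norm{v(C_n^h)}\to e^{i\theta}$, so any $\epsilon'<\epsilon$ eventually works; the threshold $N$ depends on $g$ but $\epsilon'$ does not, which is all you need. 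The paper's single-cylinder argument sidesteps this computation entirely.

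One small redundancy: once you have that $\mathcal{G}_{\epsilon'}=\mathcal{U}\cap\mathcal{O}^h_{\epsilon'}\cap\mathcal{O}^v_{\epsilon'}$ is a dense $G_\delta$ contained in the ergodic directions, you are done; the further remarks about $\mathcal{E}$ being Borel, invariant and comeager are not needed for the statement as phrased (and the paper does not make them).
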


\begin{proof}
We have shown in the proof of  Theorem~\ref{thm:rationaldirections} that there exist two different strips $\widetilde{C}^h$ and $\widetilde{C}^v$ on $\widetilde{M}(z)_\Gamma$ with $k(C^h)=(1,0)$ and $k(C^v)=(0,1)$.
Since the Veech group of $\widetilde{M}(z)_\Gamma$ is discrete, we can enumerate its elements: $G=\{g_1,g_2,\dots\}$.
For a natural $n$ and a fixed $\epsilon>0$, call $B_n\subset S^1$ the set of $\theta$'s for which
\begin{equation}\label{eq:dwellapproximable}
	 \abs{(\cos\theta,\sin\theta) \wedge g v(C^h)} \leq (1-\epsilon) \frac{A(C^h)}{2},
		\quad \text{for some $g\in G\setminus\{g_1,\dots,g_n\}$.}
\end{equation}
Each of these sets is open.
Moreover, if $\theta\in B_n$ then all its orbit under $G$, except eventually $g_i g^{-1}$, for $i=1,\dots,n$, is contained in $B_n$ as well.
Since the limit set of $G$ is $\RR\PP^1$, this implies that each $B_n$ is dense in $S^1$.
Consider the set $B^h=\cap B_n$.
If a direction is in $B^h$ then it satisfies~\eqref{eq:dwellapproximable} for infinitely many different $g$'s.
It is thus well approximated by the infinite strips $g \widetilde{C}^h$.
Since the intersection form is invariant under diffeomorphisms, we have $k(g\widetilde{C}^h)=(1,0)$.
Summing up, we have shown that we can find a sequence of strips that produce the essential value $(1,0)$ and well approximate every direction in a $G_\delta$ dense set.

Reasoning in the same way on $C^v$, we get the existence of a $G_\delta$ dense subset of  $B^v\subset S^1$, for which similar conclusions hold for the essential value $(0,1)$.
The intersection $B^h\cap B^v$ is thus a $G_\delta$ dense subset of $S^1$ formed of directions well approximated by infinitely many strips that produce essential values $(1,0)$ and $(0,1)$.

To conclude, it suffices to remark that the directions in $B^h\cap B^v$ are ergodic directions for the directional flow on the compact surfaces $M(z)$.
This is true because they do not correspond to a cylinder decomposition of the surface.
Since $M(z)$ is a square-tiled surface, and hence a Veech surface, they have to be ergodic directions.
\end{proof}

\section{Back to Eaton lenses}\label{sec:returntoeaton}
In this section we translate back the results we obtained in the previous sections to our original setting of systems of Eaton lenses in order to prove Theorem~\ref{thm:HausdorffEaton}.

Recall that when we reduced from the system $L(\Lambda,R)$ of $\Lambda$-periodic Eaton lenses of radius $R$ to its flat counterpart $F(\Lambda,R)$, we were interested only in the vertical direction and thus we could substitute a circular lens with its horizontal diameter.
In the proof of Theorem~\ref{thm:HausdorffEaton} we will use several times the following fact.

\begin{lemma}[Lemma~9.2 of~\cite{FraczekUlcigrai:notergodic}]\label{thm:ergodictoergodic}
Let $(M,\omega)$ be a translation surface and $\theta\in S^1$ a direction.
If $g\in\SL(2,\RR)$, call $\theta'$ the direction determined by $e^{i\theta'}=ge^{i\theta}/\norm{ge^{i\theta}}$.
Then there exists an $s>0$ such that the flows $\phi_{st}^{\theta'}$ on $g\cdot(M,\omega)$ and $\flow$ on $(M,\omega)$ are measure theoretically isomorphic via a homeomorphism.
In particular, one is ergodic if and only if the other is ergodic.
\end{lemma}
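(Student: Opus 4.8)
The plan is to build the required isomorphism directly from the homeomorphism underlying the $\GL^+(2,\RR)$-action on translation surfaces recalled in Section~\ref{sec:background}. The element $g$ acts by post-composition with the translation charts, which produces a homeomorphism $A_g\colon M\to g\cdot(M,\omega)$ that maps $\Sigma$ onto the singular set of $g\cdot(M,\omega)$ and that is affine with constant derivative $g$ in the natural coordinates on the complement of the singularities. Identifying tangent spaces with $\CC$ via the local primitives of the $1$-forms, $A_g$ carries the unit generator $X_\theta^\omega$ of $\flow$ (characterised by $\omega(X_\theta^\omega)=e^{i\theta}$) to the vector field $A_g(p)\mapsto g\,e^{i\theta}$ on $g\cdot(M,\omega)$. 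Writing $g\,e^{i\theta}=s\,e^{i\theta'}$ with $s=\norm{ge^{i\theta}}>0$, this is exactly $s$ times the unit generator of the direction-$\theta'$ flow; hence $A_g$ conjugates the two flows with a constant time change, $A_g\circ\phi_t^\theta=\phi_{st}^{\theta'}\circ A_g$.

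It remains to check that $A_g$ respects the measures. The natural area form $\tfrac{i}{2}\eta\wedge\overline\eta$ of a translation surface $(N,\eta)$ is just Lebesgue measure in any translation chart, and in such charts $A_g$ is the affine map with linear part $g$, so it scales area by $\abs{\det g}=1$. Therefore $A_g$ pushes $\tfrac{i}{2}\omega\wedge\overline\omega$ to the natural area form of $g\cdot(M,\omega)$; being a homeomorphism that sends the measure-zero set $\Sigma$ to a measure-zero set, it is a measure-theoretic isomorphism from $\bigl(M,\flow\bigr)$ onto $\bigl(g\cdot(M,\omega),\phi_{st}^{\theta'}\bigr)$, which is the first claim. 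For the ``in particular'', observe that a set is invariant under $\{\phi_{st}^{\theta'}\}_{t\in\RR}$ if and only if it is invariant under $\{\phi_u^{\theta'}\}_{u\in\RR}$ (put $u=st$ and use $s\neq0$), so the two flows have the same invariant $\sigma$-algebra and hence the same ergodicity status; combined with the conjugacy above this gives the equivalence of ergodicity.

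The argument is soft, and the only point that needs genuine care is the assertion that the time-change factor $s$ is a global constant and not a function on $M$. This is precisely where the linearity of the action enters: since $A_g$ has the constant matrix $g$ as derivative and every leaf of $\flow$ is a straight segment pointing in the fixed direction $e^{i\theta}$, $A_g$ stretches arclength along each such leaf by the same factor $\norm{ge^{i\theta}}$. I would also record that $g\in\GL^+(2,\RR)$ guarantees $A_g$ intertwines the \emph{forward} (rather than time-reversed) flows, and that removing the measure-zero set of singular leaves affects none of the statements.
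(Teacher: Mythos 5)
Your argument is correct. Note that the paper itself does not prove this lemma --- it is quoted verbatim from Lemma~9.2 of Fr\k{a}czek--Ulcigrai and used as a black box --- so there is no in-paper proof to compare against; your derivation is the standard one (the affine homeomorphism $A_g$ with constant derivative $g$ pushes $X_\theta^\omega$ to the constant vector field $ge^{i\theta}=se^{i\theta'}$, preserves the area form since $\det g=1$, and a constant time change does not alter the invariant $\sigma$-algebra), and it correctly supplies the missing details, including the one genuinely delicate point that the rescaling factor $s$ is a global constant rather than a function on $M$.
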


\begin{proof}[Proof of Theorem~\ref{thm:HausdorffEaton}]
Let $0<R<1/2$ be the radius of each lens.
Then $\ZZ^2$ is an $R$-admissible lattice.
Since $R$-admissibility is an open condition which is invariant under rotations, there exists an $\epsilon>0$ such that $G_t h_\tau r_\theta \cdot\ZZ^2$ is still $R$-admissible for $(t,\tau,\theta)\in (-\epsilon,\epsilon)^2\times S^1$ and
\[
	h_\tau=\begin{pmatrix}
			1	&	0\\
			\tau	&	1
		\end{pmatrix}.
\]

Remark that rational numbers of the form $s/2q$ with $0<s<q$ odd are dense in the interval $\bigl(0,\frac{1}{2}\bigr)$.
For $s/2q \to R$ and $q\to\infty$ one has
\[
	\frac{R}{\frac{s}{2q}\cos(\operatorname{arccot}4q)}\to 1.
\]
We can then find, for any given $\epsilon>0$, a $q$ big enough and a number $s/2q$ close enough to $R$ so that
\[
	\log\frac{R}{\frac{s}{2q}\cos(\operatorname{arccot}4q)}<\epsilon.
\]
Write $z=(0,\frac{s}{2q})$ and consider the surface $\widetilde{M}(z)_\Gamma$, where $\Gamma=(\beta,-\alpha)$. 
Theorem~\ref{thm:rationaldirections} gives the existence of a set of directions $E=E(s/2q)\subset S^1$, with $\dim_H E>1/2$, such that the flow in these directions is ergodic on the infinite surface $\widetilde{M}(z)_\Gamma$.
Since an ergodic direction $\theta$ is the direction of the vector $(1,\vartheta)$ and $\vartheta=[0;d-1,1,\dots]$, with $4q<d\leq6q$, we have $\cot\theta=[d-1;1,\dots]$.
As both $\cos$ and $\cot$ are decreasing functions in $(0,\pi)$ we have $\cos\theta\geq\cos(\operatorname{arccot}4q)$.
So, for all $\theta\in E$, we have
\begin{equation}\label{eq:geodesicadmissibility}
	t^*:=\log{\frac{R}{\frac{s}{2q}\cos\theta}}\leq\log{\frac{R}{\frac{s}{2q}\cos(\operatorname{arccot}4q)}}<\epsilon.
\end{equation}

Choose now one specific ergodic direction $\theta$ on $\widetilde{M}(z)_\Gamma$.
We are going to translate the ergodicity of the flow $\tflow$ on this surface into the ergodicity of the vertical flow in some periodic configuration of Eaton lenses exploiting the homogeneity of $\sL$.
The whole procedure is schematically represented in Figure~\ref{fig:slitnormalisation}.

We first apply a rotation of angle $\frac{\pi}{2}-\theta$ in anticlockwise direction in order to bring the direction $\theta$ on the vertical one.
We obtain the surface $r_{\frac{\pi}{2}-\theta}\cdot\widetilde{M}(z)_\Gamma$.
Remark that the vertical flow on this infinite surface is ergodic.

We can now apply the horocycle flow $h_\tau$ for some time $\tau\in(-\epsilon,\epsilon)$.
As $h_\tau$ fixes all the vertical vectors, Lemma~\ref{thm:ergodictoergodic} guarantees that the vertical flow is still ergodic on the infinite surface $h_\tau r_{\frac{\pi}{2}-\theta}\cdot\widetilde{M}(z)_\Gamma$.
Moreover, the lattice $h_\tau r_{\frac{\pi}{2}-\theta}\cdot\ZZ^2$ is $R$-admissible by our assumption on $\tau$.

Apply the geodesic flow $G_t$ for time $t^*$ given by~\eqref{eq:geodesicadmissibility}.
We obtain the infinite surface $G_{t^*} h_\tau r_{\frac{\pi}{2}-\theta}\cdot\widetilde{M}(z)_\Gamma$ on which the vertical flow is ergodic, once again by Lemma~\ref{thm:ergodictoergodic}.
Finally, we can project the slit we have obtained onto a horizontal one centred at the same point.
Time $t^*$ is chosen so that the resulting slit has precisely length $2R$.
Calling $\Lambda = G_{t^*} h_\tau r_{\frac{\pi}{2}-\theta}\cdot\ZZ^2$ and using the notation of the introduction, we have obtained the surface $\widetilde{M}(\Lambda,R)$, the orientation covering of $F(\Lambda,R)$, the $\Lambda$-periodic configuration of flat lenses of length $2R$.
The vertical flow on this new infinite surface has the same global behaviour of the vertical flow on $G_{t^*} h_\tau r_{\frac{\pi}{2}-\theta}\cdot\widetilde{M}(z)_\Gamma$ since the two flows differ only on a small neighbourhood of the slits.
This tells us, in particular, that the vertical flow on the surface $\widetilde{M}(\Lambda,R)$ is still ergodic.
Remark that the lattice $\Lambda$ is $R$-admissible thanks to~\eqref{eq:geodesicadmissibility}.
Hence, this result for the orientation covering of the flat lenses immediately implies that the Eaton flow on the corresponding system of circular Eaton lenses is ergodic.

\begin{figure}[t]
\centering
\def\svgwidth{0.9\textwidth}
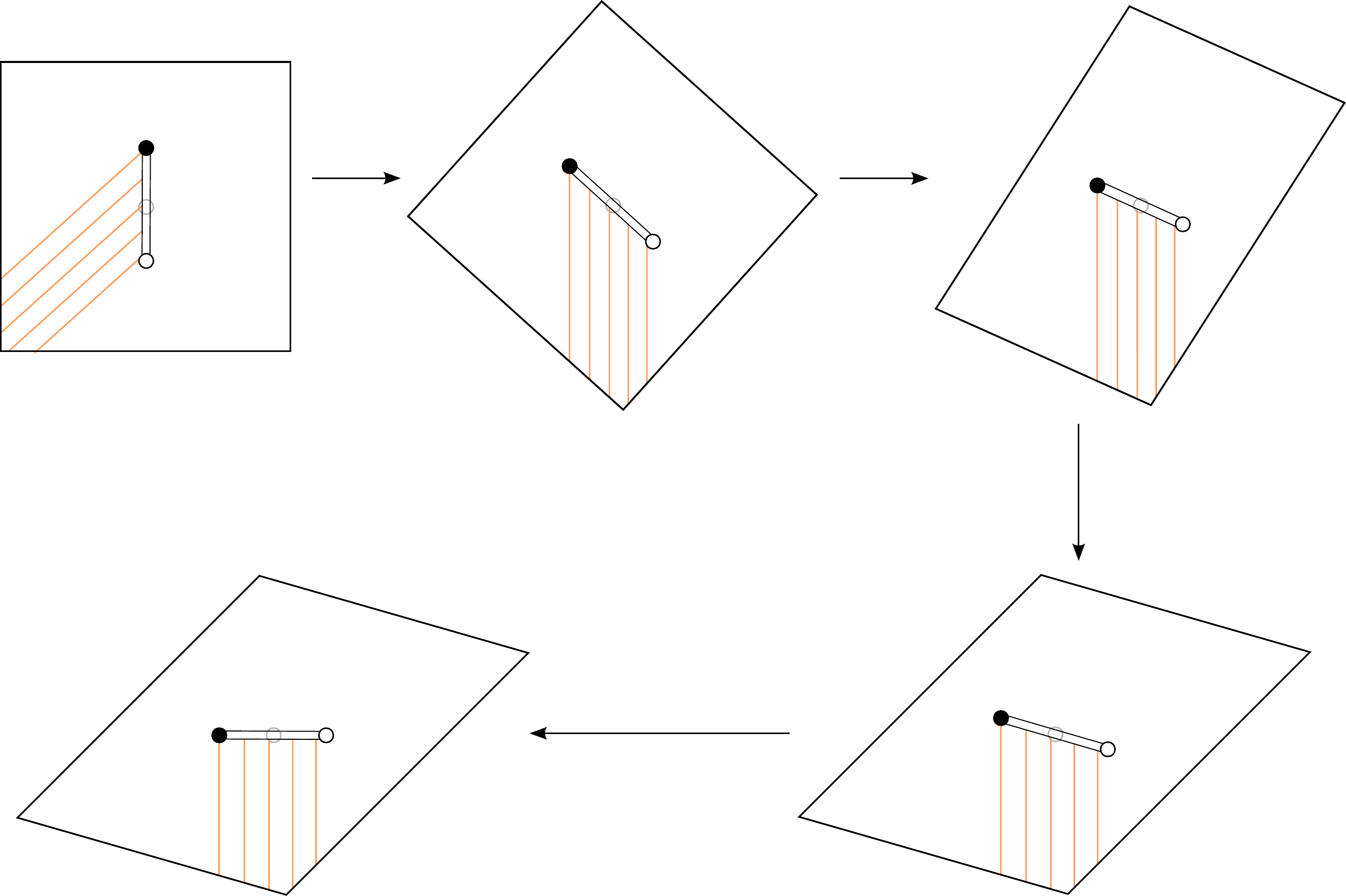
\caption{A cartoon of the normalisation of the slit as described in the proof of Theorem~\ref{thm:HausdorffEaton}.}
\label{fig:slitnormalisation}
\end{figure}

Consider now the set
\[
	\mathscr{E}=\Set{G_{t^*}h_\tau r_{\frac{\pi}{2}-\theta} \cdot\ZZ^2, \tau\in(-\epsilon,\epsilon), \theta\in E\left(\tfrac{s}{2q}\right), t^*=\log{\frac{R}{\frac{s}{2q}\cos\theta}}}\subset\sL.
\]
We just showed that these lattices are $R$-admissible and that, for every $\Lambda\in\mathscr{E}$, the vertical flow on $\widetilde{M}(\Lambda,R)$ is ergodic.
We now want to estimate the Hausdorff dimension of this set.

The local product structure on $\sL$ given by the Iwasawa ANK decomposition of $\SL(2,\RR)$ implies that the Haar measure $\mu_\sL$ is locally equivalent to the product Lebesgue measure in the coordinates $(t,\tau,\theta)$.
Since the $t$-coordinate is uniquely determined in terms of $\theta$, $\dim_H \mathscr{E} = \dim_H \pi_{2,3}(\mathscr{E})$, where
\[
\pi_{2,3}(\mathscr{E})=\Set{h_\tau r_{\frac{\pi}{2}-\theta} \cdot\ZZ^2, \tau\in(-\epsilon,\epsilon), \theta\in E\left(\tfrac{s}{2q}\right)}\subset\sL.
\]
Writing $\pi_{2,3}(\mathscr{E})=\bigcup_{\tau\in (-\epsilon,\epsilon)} h_\tau \cdot \Set{r_{\frac{\pi}{2}-\theta} \cdot\ZZ^2 \theta\in E\left(\tfrac{s}{2q}\right)}$, we see that, in the $(\tau,\theta)$ coordinates, $\pi_{2,3}(\mathscr{E})=(-\epsilon,\epsilon) \times E$.
This tells us that
\[
	\dim_H \mathscr{E} = \dim_H \pi_{2,3}(\mathscr{E})\geq1+\dim_H E > 1+\frac{1}{2}=\frac{3}{2},
\]
as we wanted to prove.
\end{proof}

\section*{Acknowledgments}
The author thanks his supervisor Corinna Ulcigrai for her invaluable support and guidance throughout the writing of this paper.
We thank Krzysztof Fr\k{a}czek for suggesting the problem to us and for useful discussions and Thomas Jordan for the precious help on the results concerning Hausdorff dimension.
We also thank the anonymous referee for his/her suggestions that improved the presentation of the paper.

\printbibliography
\end{document}